\def\No{No}
\theoremstyle{plain}
\newtheorem{lemma}{Лемма}
\theoremstyle{definition}
\newtheorem{definition}{Определение}
\newtheorem{remark}{Замечание}
\theoremstyle{plain}
\newtoks\thehProclaim
\newtheorem*{Proclaim}{\the\thehProclaim}
\newenvironment{proclaim}[1]{\thehProclaim{#1}\begin{Proclaim}}{\end{Proclaim}}
\theoremstyle{definition}
\newtoks{\thehRemark}
\newtheorem*{Remark}{\the\thehRemark}
\begin{document}

\title[Multipliers in Bessel potential spaces]{Multipliers in Bessel potential spaces.  The case of different sign smooth indices}

\author{Belyaev A.A.}

\address{Department of Mechanics and Mathematics\\
Lomonosov Moscow State University\\
119991, Leninskie Gori, 1, Main MSU Building, 
Russia}

\email{alexei.a.belyaev@gmail.com}

\author{Shkalikov A.A.}

\address{Department of Mechanics and Mathematics\\
Lomonosov Moscow State University\\
119991, Leninskie Gori, 1, Main MSU Building
Russia}

\email{shkalikov@mi.ras.ru}

\subjclass[2000]{Primary 53A04; Secondary 52A40, 52A10}

\keywords{ Bessel potential spaces,  multipliers, embedding theorems, uniformly localized spaces }

\begin{abstract}
The objective of this paper is to describe the space of multipliers acting from a Bessel potential space  $H^s_p(\mathbb R^n)$ into another space  $H^{-t}_q(\mathbb R^n)$, provided that  the smooth indices of these spaces have different signs, i.e. $s, t \geqslant 0$. This space of multipliers consists
of distributions  $u$, such that for all $\varphi \in H^s_p(\mathbb R^n)$ the product $\varphi \cdot u$ is well-defined and belongs to the space  $H^{-t}_q(\mathbb R^n)$. We succeed to describe this space explicitly,  provided that  $p \leqslant q$  and one of the following conditions
$$
s \geqslant t \geqslant 0, \ s > n/p \ \ \, \text{or} \ \ \, t \geqslant s \geqslant 0, \  t > n/q' \quad (\: \text{где} \; 1/q +1/q' = 1),
$$
holds. In this case one has
$$
M[H^s_p(\mathbb{R}^n) \to H^{-t}_{q}(\mathbb{R}^n)] = H^{-t}_{q, \: unif}(\mathbb{R}^n) \cap H^{-s}_{p', \: unif}(\mathbb{R}^n),
$$
where $H^\gamma_{r, \: unif}(\mathbb{R}^n), \: \gamma \in \mathbb{R}, \: r > 1$ is the scale of uniformly localized Bessel potential spaces.

In particular but important case  $s = t < n/\max (p,q')$ we prove two-sided continuous embeddings
$$
H^{-s}_{r_1, \: unif}(\mathbb{R}^n) \subset M[H^s_p(\mathbb{R}^n) \to H^{-s}_q(\mathbb{R}^n)] \subset H^{-s}_{r_2, \: unif}(\mathbb{R}^n),
$$
where  $r_2 = \max (p', q), \  r_1 =[s/n-(1/p -1/q)]^{-1}$.
\end{abstract}

\thanks{
The results presented in Theorem 1 of this work received support from the Russian Science Fundation, RSCF, grant $\No$ 17-11-01215; the results presented in Theorem 2 received support from the Russian Foundatioon of Fundamental Research, RFFR, grant $\No$ 16-01-00706.}


\date{5 January 2018}

\maketitle

\begin{center}
\section{ВВЕДЕНИЕ}
\end{center}
\medskip

Цель работы --- изучить мультипликаторы, действующие из одного пространства бесселевых потенциалов $H^s_p(\mathbb R^n)$ в другое пространство $H^{-t}_q(\mathbb R^n)$, где $s, t \geqslant 0, \: p, q \in (1, +\infty)$. Пространство всех таких мультипликаторов будем обозначать через $M[H^s_p(\mathbb{R}^n) \to H^{-t}_q(\mathbb{R}^n)]$. Основное внимание уделено случаю, когда $p \leqslant q$ и выполнено одно из условий
\begin{equation}\label{1}
s \geqslant t \geqslant 0, \ s > n/p \ \ \, \text{или} \ \ \, t \geqslant s \geqslant 0, \ t > n/q',
\end{equation}
где число $q'$ определяется как сопряжённое по Гёльдеру к числу $q$. Мы покажем, что в этом случае пространство мультипликаторов допускает явное описание в терминах шкалы пространств $H^\gamma_{r, \: unif}(\mathbb{R}^n)$ равномерно локализованных бесселевых потенциалов. Более того, оказывется, что такое явное описание возможно только при $p\leqslant q$ и выполнении одного из условий \eqref{1}.

Шкала пространств $H^\gamma_{r, \: unif}(\mathbb{R}^n)$ для положительных индексов гладкости $\gamma$ была введена Р.~Стрихартцем в работе \cite{Str1}. В этой работе был также получен первый важный результат об описании пространств мультипликаторов в терминах этой шкалы, 
состоящий в том, что
$$
M[H^s_p(\mathbb{R}^n) \to H^s_p(\mathbb{R}^n)] = H^s_{p, \: unif}(\mathbb{R}^n) \quad \text{при} \ \, p > 1, \: s > \frac np \, .
$$
В дальнейшем задача по изучению мультипликаторов не только в пространствах бесселевых потенциалов, но и в других функциональных пространствах соболевского типа, таких, как пространства Лизоркина--Трибе- ля, Бесова и др., исследовалась многими авторами. Отметим, в частности, монографии В.\,Г.~Мазьи и Т.\,О.~Шапошниковой \cite{MShbook}, В.~Зикеля и Т.~Рунста \cite{RSbook}, а также работы Ж.~Бурдо \cite{Bour}, Й.~Франке \cite{Fr}, В.~Зикеля, Х.~Трибеля и И.~Смирнова \cite{SiTr, Si, SS}.

В случае $s > n/p$, который называют {\it стрихартцевским}, наиболее общий результат об описании мультипликаторов в пространствах бесселевых потенциалов
$M[H^s_p(\mathbb{R}^n) \to H^t_q(\mathbb{R}^n)]$ с индексами гладкости одинакового знака получен в недавней работе авторов \cite{BelShk}. В случае же $s \leqslant n/p$ явное описание пространств мультипликаторов получить не удаётся, но при этом плодотворным оказывается 
подход, основанный на характеризации пространств мультипликаторов в терминах ёмкостей, детальное изложение которого содержится в \cite{MShbook}.

Начало систематическому изучению пространств мультипликаторов в случае, когда индексы гладкости пространств $H^s_p(\mathbb{R}^n)$ и $H^t_q(\mathbb{R}^n)$ разного знака, было положено М.\,И.~Нейман-Заде и А.\,А.~Шкаликовым в работе \cite{NZSh1}, где была отмечена важная роль теории мультипликаторов в приложениях к спектральной теории дифференциальных операторов, в частности, при изучении оператора Шрёдингера и более общих эллиптических дифференциальных операторов с сингулярными потенциалами. Предложенный в \cite{NZSh1} подход к изучению мультипликаторов в пространствах бесселевых потенциалов был развит в работах \cite{BSh, NZSh2}, где были получены следующие результаты:

\begin{align*}
1) \: & M[H^s_p(\mathbb{R}^n) \to H^{-s}_{p'}(\mathbb{R}^n)] = H^{-s}_{p', \: unif}(\mathbb{R}^n) \; \; \mbox{при} \; \; 1 < p \leqslant 2, \: s > \frac{n}{p} \: ;\\
2) \: & M[H^s_p(\mathbb{R}^n) \to H^{-s}_p(\mathbb{R}^n)] = H^{-s}_{\max (p, p'), \: unif}(\mathbb{R}^n) \; \; \mbox{при} \; \; p > 1, \: s > \frac{n}{\max (p, p')} \: ;\\
3) \: & M[H^s_2(\mathbb{R}^n) \to H^{-t}_2(\mathbb{R}^n)] = H^{-t}_{2, \: unif}(\mathbb{R}^n) \; \; \mbox{при} \; \; s \geqslant t \geqslant 0, \: s > \frac{n}{2} \: .
\end{align*}

В случае, когда показатели гладкости являются целыми и соответствующие пространства бесселевых потенциалов совпадают с классическими пространствами Соболева $W^s_p(\mathbb R^n)$, в работе В.\,Г.~Мазьи и Т.\,О.~Шапошниковой \cite{MSh} с помощью других методов было установлено, что
$$
4) \: M[W^k_p(\mathbb{R}^n) \to W^{-l}_p(\mathbb{R}^n)] = W^{-l}_{p, \: unif}(\mathbb{R}^n) \cap W^{-k}_{p', \: unif}(\mathbb{R}^n),
$$
если $k, \: l \in \mathbb{N}$ и выполнено одно из условий
$$
a) \: k \geqslant l, \; k > \frac{n}{p} \: ; \; \; b) \: l \geqslant k, \; l > \frac{n}{p'} \: .
$$

Отметим также работы В.\,Г.~Мазьи и И.\,Э.~Вербицкого \cite{MV1}, \cite{MV2}, С.~Гала и П.\,Ж.~Лемари-Рьёссе \cite{LRG} и П.~Жермена \cite{GerPaper}, где мультипликаторы, действующие между пространствами бесселевых потенциалов с индексами гладкости разного знака, изучались с помощью отличающихся от развитых в цикле статей \cite{BSh, NZSh1, NZSh2} методов. Шкала пространств равномерно локализованных бесселевых потенциалов в этих работах не использовалась. Из недавних работ упомянем статью Л.\,К.~Кусаиновой, А.\,Х.~Мырзагалиевой и Я.\,Т.~Султанаева \cite{KMS}, где действующие в весовых пространствах Соболева мультипликаторы применялись для изучения оператора Шрёдингера с сингулярным потенциалом.

Методы работ \cite{BSh, NZSh2} были развиты А.\,А.~Беляевым в статье \cite{Bel2}, в которой были получены следующие результаты:

\medskip

5) если $1 < p \leqslant q$ и $s > n/\max(p, q')$, то
$$
M[H^s_p(\mathbb{R}^n) \to H^{-s}_{q}(\mathbb{R}^n)] = H^{-s}_{\max(p', q), \: unif}(\mathbb{R}^n);
$$

6) если $1 < p \leqslant 2$ и $\max(s, t) > n/p$, то
$$
M[H^s_p(\mathbb{R}^n) \to H^{-t}_{p'}(\mathbb{R}^n)] = H^{-\min (s, t)}_{p', \: unif}(\mathbb{R}^n).
$$

Сформулированные здесь утверждения 1)---6) представляют собой частные случаи следующей теоремы, являющейся главным результатом настоящей работы.

\begin{proclaim}{Теорема 1} Пусть $p, \: q > 1$ и $s, \: t \geqslant 0$. Тогда справедливо непрерывное вложение
\begin{equation}\label{emb}
M[H^s_p(\mathbb{R}^n) \to H^{-t}_{q}(\mathbb{R}^n)] \subset H^{-t}_{q, \: unif}(\mathbb{R}^n) \cap H^{-s}_{p', \: unif}(\mathbb{R}^n).
\end{equation}
Если, к тому же, $p \leqslant q$ и выполняется одно из двух условий
\begin{equation}\label{2}
a) \; s \geqslant t, \; s > \frac{n}{p} \: ; \; \quad b) \; t \geqslant s, \; t > \frac{n}{q'} \: ,
\end{equation}
то имеет место совпадение пространств
\begin{equation}\label{equi}
M[H^s_p(\mathbb{R}^n) \to H^{-t}_{q}(\mathbb{R}^n)] = H^{-t}_{q, \: unif}(\mathbb{R}^n) \cap H^{-s}_{p', \: unif}(\mathbb{R}^n)
\end{equation}
и нормы этих пространств эквивалентны.
\end{proclaim}

Основная новизна этого результата состоит в том, что мы получаем описание пространства мультипликаторов для случая, когда все четыре индекса $s, \: t, \: p, \: q$ могут быть различными. В этом случае доказательство существенно осложняется и требует использования новых технических приемов. Для доказательства теоремы 1 мы развиваем технику, которая в более частных случаях использовалась в работах \cite{NZSh1, BSh, NZSh2, Bel2, BelShk}. Помимо уже известных результатов мы доказываем еще три леммы, которые представляют и самостоятельный интерес. 

Для нестрихартцевского случая мы получаем следующий результат.

\begin{proclaim}{Теорема 2}
Пусть $1 < p \leqslant q$ и выполнено условие
$$
\left( \frac 1 p - \frac 1 q \right) \cdot n \; < \; s \; < \; \frac n {\max(p,q')} \: .
$$
Тогда имеют место непрерывные вложения
$$
H^{-s}_{r, \: unif}(\mathbb{R}^n) \subset M[H^s_p(\mathbb{R}^n) \to H^{-s}_q(\mathbb{R}^n)] \subset H^{-s}_{\max(p', q), \: unif}(\mathbb{R}^n),
$$
где
$$
r \; = \; \frac n {s - \left(\frac 1 p - \frac 1 q \right)n } \: .
$$
\end{proclaim}

\medskip

Статья имеет следующую структуру. В $\S \: 2$ мы вводим необходимые обозначения и определения, используемые в работе, а также приводим для удобства читателей формулировки результатов, которые применяются для доказательства основных теорем. В $\S \: 3$ мы доказываем три леммы, используемые в доказательстве основной теоремы при работе с наиболее трудным случаем, который рассматривается на шаге 6 доказательства.  Наконец, в $\S \: 4$ мы проводим доказательства обеих  теорем.

\section{ОСНОВНЫЕ ОПРЕДЕЛЕНИЯ И ФАКТЫ}

{\bf 1.} Далее будут использоваться следующие обозначения.

1) Через $\gamma'$ обозначается сопряженное с  $\gamma > 1$ число, определяемое соотношением
$$
1/\gamma + 1/\gamma' = 1;
$$

\medskip

2) в пространстве  $\mathbb R^n$ будем рассматривать две нормы
$$
| x |_1 \stackrel{def}{=} \sum\limits_{j = 1}^n |x_j|, \quad
| x | \stackrel{def}{=} \Bigl( \; \sum\limits_{j = 1}^n |x_j|^2 \: \Bigr)^{\frac{1}{2}}, \quad x = (x_1, \ldots, x_n) \in \mathbb{R}^n;
$$

\medskip

3) $\mu_L$ --- классическая мера Лебега на $\mathbb{R}^n;$

\medskip

4) $D(\mathbb{R}^n)$ --- пространство бесконечно дифференцируемых функций с компактным носителем, на котором топология определена стандартным образом;

\medskip

5) $S(\mathbb{R}^n)$ --- пространство Шварца (быстро убывающих бесконечно дифференцируемых функций) со стандартно определяемой топологией;

\medskip

6) $X'$ --- определяемое по топологическому линейному пространству $X$ пространство всех секвенциально непрерывных на $X$ комплекснозначных линейных функционалов с топологией поточечной сходимости;

\medskip

7) $\mathcal{B}(X, Y)$ --- пространство ограниченных линейных операторов из нормированного пространства $X$ в нормированное пространство $Y$ с обычной операторной нормой; при $X=Y$ это пространство обозначается $B(X)$;

\medskip

8) $X_{loc}(\mathbb{R}^n)$ --- определяемое по пространству $X(\mathbb{R}^n) \subset D'(\mathbb{R}^n)$ множество $\{ u \in D'(\mathbb{R}^n) | \; f \cdot u \in X(\mathbb{R}^n) \; \; \forall \: f \in D(\mathbb{R}^n) \};$

\medskip

9) $\mathbf{f}$ --- регулярный функционал из $D'(\mathbb{R}^n) \; \left( \mbox{или} \; S'(\mathbb{R}^n) \right)$ с плотностью $f \in L_{1, \: loc}(\mathbb{R}^n)$
(обычно $\mathbf{f}$ и $f$  отождествляют, но нам часто будет удобнее их различать);

\medskip

10) $f_{(z)}$ --- сдвиг функции $f \colon \mathbb{R}^n \to \mathbb{C}$ на $z \in \mathbb{R}^n,$ определяемый соотношением
$$
f_{(z)}(x) = f(x - z) \; \; \; \forall \: x \in \mathbb{R}^n;
$$

\medskip

11) $X_1 \cap X_2$ --- пересечение двух нормированных пространств $X_1$ и $X_2$ с  нормой
$$
\| x \|_{X_1 \cap X_2} = \max (\| x \|_{X_1}, \| x \|_{X_2}) \; \; \forall \: x \in X_1 \cap X_2.
$$

\medskip

Через $C, \: C_j, \ j\in \mathbb N$, обозначаются различные константы, не зависящие от функций (или распределений), участвующих в оценках.

\medskip

Отметим, что для всех $f, \: g \in D(\mathbb{R}^n)$ распределение $f \cdot \: \mathbf{g} = g \cdot \: \mathbf{f} \in D'(\mathbb{R}^n)$ является регулярным функционалом с плотностью $f \cdot g.$

\medskip

{\bf 2.} Приведем определения функциональных пространств, фигурирующих в работе, и соответствующих пространств мультипликаторов.

При $p \geqslant 1$ нормированное пространство $H^0_p(\mathbb{R}^n)$ определяется как линейное пространство всех регулярных распределений $\mathbf{f} \in S'(\mathbb{R}^n)$ с плотностью $f \in L_p(\mathbb{R}^n),$ причем норма на нём задаётся соотношением
$$
\| \mathbf{f} \|_{H^0_p(\mathbb{R}^n)} = \| f \|_{L_p(\mathbb{R}^n)} \; \; \forall \: f \in L_p(\mathbb{R}^n).
$$

\begin{definition}\label{def_Bessel_potential}
{\sl Пусть $s \in \mathbb{R}, \: p \geqslant  1.$  Пространством бесселевых потенциалов $H^s_p(\mathbb{R}^n)$ называется линейное пространство
$$
\lbrace{ u \in S'(\mathbb{R}^n) \: | \; J_s(u) \in H^0_p(\mathbb{R}^n) \rbrace}
$$
с  нормой
$$
\| u \|_{H^s_p(\mathbb{R}^n)} \stackrel{def}{=} \| J_s (u) \|_{H^0_p(\mathbb{R}^n)}.
$$
Здесь линейный оператор $J_s \colon S'(\mathbb{R}^n) \longrightarrow S'(\mathbb{R}^n)$ определяется равенством
$$
J_s(u) \stackrel{def}{\:=\:} {\mathcal F}^{-1} ( \varphi_s \cdot {\mathcal F}(u)) \quad \forall \; u \in S'(\mathbb{R}^n),
$$
где $\mathcal F$ и ${\mathcal F}^{-1}$ ---  прямое и обратное преобразования Фурье в $S'(\mathbb{R}^n),$ а функция $\varphi_s \colon \mathbb{R}^n \longrightarrow \mathbb{R}$ определяется соотношением
$$
\varphi_s(x) = (1 + |x|^2)^{s/2}, \quad  x \in \mathbb{R}^n.
$$
}
\end{definition}

Отметим, что для произвольных индексов $s \in \mathbb{R}$ и $p > 1$ множество $\mathbf{D}(\mathbb{R}^n) \stackrel{def}{=} \{ \mathbf{f} \in D'(\mathbb{R}^n) | \; f \in D(\mathbb{R}^n) \}$ является всюду плотным в пространстве $H^s_p(\mathbb{R}^n).$ 


При $s \in \mathbb{R}$ и $p > 1$ можно ввести дуальное скалярное произведение
$$
< \cdot, \cdot >_s \colon H^{-s}_{p'}(\mathbb{R}^n) \times H^s_p(\mathbb{R}^n) \to \mathbb{C},
$$
определяемое  соотношением
$$
< u, v >_s \: = \: < J_{-s}(u), J_s(v) >_0 \: , \quad \  u \in H^{-s}_{p'}(\mathbb{R}^n), \ \ v \in H^s_p(\mathbb{R}^n),
$$
где
$$
< \mathbf{f}, \mathbf{g} >_0 \: = \: \int\limits_{\mathbb{R}^n} f(x) \cdot \overline{g(x)} \: d\mu_L(x),
 \quad\  f \in L_{p'}(\mathbb{R}^n), \ \  g \in L_p(\mathbb{R}^n).
$$
С помощью этого дуального скалярного произведения естественным образом устанавливается изометрический изоморфизм пространства $H^{-s}_{p'}(\mathbb{R}^n)$ и пространства $(H^s_p(\mathbb{R}^n))^{*},$ сопряжённого к  $H^s_p(\mathbb{R}^n).$

\begin{remark}\label{Bessel_differentiation}
{\sl Хорошо известно (и легко следует из определения),
что для произвольного мультииндекса $\alpha \in \mathbb{Z}^n_+$ и произвольных чисел $s \in \mathbb{R}, \; p > 1$ сужение дифференциального оператора $D^{\alpha} \colon S'(\mathbb{R}^n) \to S'(\mathbb{R}^n)$ на $H^s_p(\mathbb{R}^n)$ является ограниченным оператором из пространства $H^s_p(\mathbb{R}^n)$ в пространство $H^{s - |\alpha|_1}_p(\mathbb{R}^n).$
}
\end{remark}

\medskip

\begin{definition}\label{loc_unif}
{\sl Пусть $s \in \mathbb{R}, \; p > 1$ и функция $\eta \in D(\mathbb{R}^n)$ удовлетворяет условиям
\begin{equation}\label{def2}
0 \leqslant \eta(x) \leqslant 1 \; \; \forall \: x \in \mathbb{R}^n, \; \; \; \; \; \eta(x) = 1 \; \; \forall \: x \colon |x| \leqslant 1.
\end{equation}
Линейное пространство
\begin{equation}\label{def2_1}
\{ u \in H^s_{p, \: loc}(\mathbb{R}^n) | \; \sup_{z \in \mathbb{R}^n} \; \| \eta_{(z)} \cdot u \|_{H^s_p(\mathbb{R}^n)} \: < + \infty \}
\end{equation}
называется пространством равномерно локализованных бесселевых потенциалов и обозначается $H^s_{p, \: unif}(\mathbb{R}^n)$. Норма в этом пространстве определяется равенством
$$
\| u \|_{H^s_{p, \: unif}(\mathbb{R}^n)} \stackrel{def}{\: = \:} \sup_{z \in \mathbb{R}^n} \; \| \eta_{(z)} \cdot u \|_{H^s_p(\mathbb{R}^n)} \; \; \forall \: u \in H^s_{p, \: unif}(\mathbb{R}^n).
$$
}
\end{definition}

Отметим, что в определении пространства  $H^s_{p, \: unif}(\mathbb{R}^n)$ участвует функция $\eta$  и норма этого пространства зависит от выбора этой функции. Однако определяемое соотношением \eqref{def2_1} множество не зависит от выбора функции $\eta$ и при выборе любых двух функций $\eta_1$ и $\eta_2$, подчинённых условиям \eqref{def2}, соответствующие нормы эквивалентны. 

\medskip

\begin{definition}\label{GeneralizedMultipliersDef}
{\sl Пусть $s, \: t \in \mathbb{R}, \; p, \: q > 1.$  Пространством мультипликаторов из $H^s_p(\mathbb{R}^n)$ в $H^t_q(\mathbb{R}^n)$  назовем линейное пространство
$$
\{ \mu \in H^t_{q, \: loc}(\mathbb{R}^n) \; | \; \; \exists \: C > 0 \; \colon \: \| f \cdot \mu \|_{H^t_q(\mathbb{R}^n)} \leqslant C \: \| \mathbf{f} \|_{H^s_p(\mathbb{R}^n)} \; \; \forall \: f \in D(\mathbb{R}^n) \}.
$$
Это пространство обозначим через $M[H^s_p(\mathbb{R}^n) \to H^t_q(\mathbb{R}^n)],$ а его норму определим равенством
$$
\| \mu \|_{M[H^s_p(\mathbb{R}^n) \to H^t_q(\mathbb{R}^n)]} = \inf \{ C > 0 \: | \; \| f \cdot \mu \|_{H^t_q(\mathbb{R}^n)} \leqslant C \: \| \mathbf{f} \|_{H^s_p(\mathbb{R}^n)} \; \; \forall \; f \in D(\mathbb{R}^n) \}.
$$
}
\end{definition}

Тем самым, мультипликатор $\mu \in M[H^s_p(\mathbb{R}^n) \to H^t_q(\mathbb{R}^n)]$ определяет оператор умножения на плотном множестве $\mathbf{D}(\mathbb R^n) \subset H^s_p(\mathbb{R}^n)$, который, в силу наличия априорной оценки, можно продолжить по непрерывности на всё пространство $H^s_p(\mathbb{R}^n)$.

\medskip

\begin{definition}\label{uniform_multipliers}
{\sl Пусть $s, \: t \in \mathbb{R}, \; p, \: q > 1$ и функция $\eta \in D(\mathbb{R}^n)$ удовлетворяет условиям
\eqref{def2}.
Определим пространство $M_{unif}[H^s_p(\mathbb{R}^n) \to H^t_q(\mathbb{R}^n)]$ как множество распределений $u \in D'(\mathbb{R}^n),$ для которых корректно определена и конечна норма
$$
\| u \|_{M_{unif}[H^s_p(\mathbb{R}^n) \to H^t_q(\mathbb{R}^n)]} \stackrel{def}{=} \sup\limits_{z \in \mathbb{R}^n} \{ \: \| \eta_{(z)} \cdot u \|_{M[H^s_p(\mathbb{R}^n) \to H^t_q(\mathbb{R}^n)]} \}.
$$
}
\end{definition}

Как и в предыдущем определении, от выбора функции $\eta$  рассматриваемое пространство фактически не зависит. При выборе двух различных функций, подчиненных условию \eqref{def2}, мы получаем одно и то же множество, а соответствующие нормы эквивалентны.

\medskip

{\bf 3.} Для удобства читателя  приведем формулировки теоремы вложения Соболева и теоремы об интерполяции для пространств бесселевых потенциалов, а также формулировки некоторых результатов из недавней работы авторов \cite{BelShk}, которые будут существенно использоваться при доказательстве основных теорем этой  работы.

Известен следующий результат (см., например, \cite[\S 2.8.1, замечание~2]{Tr}).

\begin{proclaim}{Теорема вложения Соболева}
{\sl Пусть $s, \: t \in \mathbb{R}, \; p, \: q > 1.$ Если $p \leqslant q$ и $s - n/p \geqslant t - n/q,$ то имеет место непрерывное вложение
$$
H^s_p(\mathbb{R}^n) \subset H^t_q(\mathbb{R}^n).
$$
При этом, если хотя бы одно из условий $p \leqslant q$ и $s - n/p \geqslant t - n/q$ не выполняется, то указанное вложение не имеет места.
}
\end{proclaim}

\medskip

\begin{remark}\label{unif_embedding}
{\sl Пусть $s, \: t \in \mathbb{R}, \; p, \: q > 1$. Легко видеть, что справедливо непрерывное вложение
$$
H^s_p(\mathbb{R}^n) \subset H^s_{p, \: unif}(\mathbb{R}^n).
$$
Кроме того, если имеет место один из следующих двух случаев:
$$
a) \: p \leqslant q, \; s - \frac{n}{p} \geqslant t - \frac{n}{q} \: ; \; \; b) \:  p \geqslant q, \; s \geqslant t;
$$
то справедливо непрерывное вложение
$$
H^s_{p, \: unif}(\mathbb{R}^n) \subset H^t_{q, \: unif}(\mathbb{R}^n).
$$
}
\end{remark}

\begin{remark}\label{em}
{\sl Пусть $s, \: t \in \mathbb{R}, \; p, \: q > 1.$ Тогда при любых $k, \: l \geqslant 0$ из теоремы вложения Соболева имеем
$$
H^{s + k}_p(\mathbb{R}^n) \subset H^s_p(\mathbb{R}^n) \; \mbox{и} \; H^t_q(\mathbb{R}^n) \subset H^{t - l}_q(\mathbb{R}^n),
$$
откуда следует справедливость непрерывного вложения
$$
M[H^s_p(\mathbb{R}^n) \to H^t_q(\mathbb{R}^n)] \subset M[H^{s + k}_p(\mathbb{R}^n) \to H^{t - l}_q(\mathbb{R}^n)].
$$
}
\end{remark}

\medskip

Известно, что шкала пространств бесселевых потенциалов замкнута относительно комплексной интерполяции. А именно, справедлива следующая теорема (см., например, \cite[\S 2.4.2, замечание 2d]{Tr}).

\begin{proclaim}{Теорема о комплексной интерполяции для пространств бесселевых потенциалов}
{\sl Пусть $s_0, \: s_1 \in \mathbb{R}, \; p_0, \: p_1 > 1, \; \theta \in (0, 1).$ Тогда
$$
[H^{s_0}_{p_0}(\mathbb{R}^n), H^{s_1}_{p_1}(\mathbb{R}^n)]_{\theta} = H^{s_{\theta}}_{p_{\theta}}(\mathbb{R}^n),
$$
где
$$
\frac{1}{p_{\theta}} = \frac{1 - \theta}{p_0} + \frac{\theta}{p_1} \: , \; \; s_{\theta} = (1 - \theta) \cdot s_0 + \theta \cdot s_1
$$
и $[\cdot, \, \cdot]_\theta$ --- результат применения метода комплексной интерполяции с показателем $\theta$ к соответствующей паре пространств.
}
\end{proclaim}

\begin{remark}\label{interpolation}
{\sl Пусть $s_0, \: s_1, \: t_0, \: t_1 \in \mathbb{R}$ и $\; p_0, \: p_1, \: q_0, \: q_1 > 1.$ Из сформулированной интерполяционной теоремы следует, что для произвольного оператора $A \colon S'(\mathbb{R}^n) \longrightarrow S'(\mathbb{R}^n),$ такого, что
$$
A  \in \mathcal{B}(H^{s_0}_{p_0}(\mathbb{R}^n), H^{t_0}_{q_0}(\mathbb{R}^n)) \; \; \mbox{и} \; \;  A \in  \mathcal{B}(H^{s_1}_{p_1}(\mathbb{R}^n), H^{t_1}_{q_1}(\mathbb{R}^n)),
$$
а нормы ограничений $A$ на соответствующие пространства равны $M_0$ и $M_1$, имеем
$$
A  \in \mathcal{B}(H^{s_{\theta}}_{p_{\theta}}(\mathbb{R}^n), H^{t_{\theta}}_{q_{\theta}}(\mathbb{R}^n)) \; \; \; \forall \; \theta \in (0, 1),
$$
причем норма этого оператора не превышает числа $M_0^{1-\theta}\, M_1^\theta$.
}
\end{remark}

\medskip

Далее приведём нужные результаты из работы авторов \cite{BelShk} и простые следствия из них.

\begin{lemma}{\cite[Утверждение 1]{BelShk}}\label{multipliers_symmetry}
Пусть $s, \: t \in \mathbb{R}, \; p, \: q > 1.$ Тогда имеет место совпадение пространств
$$
M[H^s_p(\mathbb{R}^n) \to H^{-t}_{q'}(\mathbb{R}^n)] = M[H^t_q(\mathbb{R}^n) \to H^{-s}_{p'}(\mathbb{R}^n)],
$$
причем нормы этих пространств равны.
\end{lemma}

\begin{lemma}{\cite[Утверждение 2]{BelShk}}\label{trivial_multipliers_embedding}
Пусть $s, \: t \in \mathbb{R}, \; p, \: q > 1.$ Тогда справедливо непрерывное вложение
\begin{equation}\label{simple}
M[H^s_p(\mathbb{R}^n) \to H^t_q(\mathbb{R}^n)] \subset H^t_{q, \: unif}(\mathbb{R}^n) \cap H^{-s}_{p', \: unif}(\mathbb{R}^n).
\end{equation}
\end{lemma}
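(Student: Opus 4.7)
The plan is to halve the work via the duality of Lemma \ref{multipliers_symmetry}. Applied with $t$ replaced by $-t$ and $q$ by $q'$, that lemma gives the isometric identification
$$
M[H^s_p(\mathbb{R}^n) \to H^t_q(\mathbb{R}^n)] \; = \; M[H^{-t}_{q'}(\mathbb{R}^n) \to H^{-s}_{p'}(\mathbb{R}^n)].
$$
Consequently it suffices to prove, for arbitrary $a, c \in \mathbb{R}$ and $b, d > 1$, the single one-sided continuous embedding $M[H^a_b(\mathbb{R}^n) \to H^c_d(\mathbb{R}^n)] \subset H^c_{d, \: unif}(\mathbb{R}^n)$; applying this fact once to the original multiplier space and once to its image under the above identification produces both terms of the intersection in \eqref{simple}.

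To prove the one-sided embedding, I would fix a multiplier $\mu$ with norm $M$ and a cutoff function $\eta \in D(\mathbb{R}^n)$ as in Definition \ref{loc_unif}, and test the defining inequality
$$
\| f \cdot \mu \|_{H^c_d(\mathbb{R}^n)} \; \leq \; M \cdot \| \mathbf{f} \|_{H^a_b(\mathbb{R}^n)}, \qquad f \in D(\mathbb{R}^n),
$$
on the translates $f = \eta_{(z)}$, $z \in \mathbb{R}^n$. The essential observation is that the Bessel potential norms are translation invariant: since $J_s$ is a Fourier multiplier, it commutes with shifts, whence $\| \eta_{(z)} \|_{H^a_b(\mathbb{R}^n)} = \| \eta \|_{H^a_b(\mathbb{R}^n)}$ is independent of $z$. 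Taking the supremum in $z$ on the left-hand side then yields
$$
\sup_{z \in \mathbb{R}^n} \| \eta_{(z)} \cdot \mu \|_{H^c_d(\mathbb{R}^n)} \; \leq \; \| \eta \|_{H^a_b(\mathbb{R}^n)} \cdot M,
$$
and the local regularity $\mu \in H^c_{d, \: loc}(\mathbb{R}^n)$ required to invoke Definition \ref{loc_unif} is built directly into Definition \ref{GeneralizedMultipliersDef}.

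No genuine obstacle arises in this argument, which is why the lemma deserves the epithet \emph{trivial}: the whole proof rests only on the duality of Lemma \ref{multipliers_symmetry} and on translation invariance of the Bessel potential norms. The continuity of the two embeddings and the explicit estimate
$$
\| \mu \|_{H^c_{d, \: unif} \cap H^{-a}_{b', \: unif}} \; \leq \; \| \eta \|_{H^a_b(\mathbb{R}^n)} \cdot \| \mu \|_{M[H^a_b \to H^c_d]}
$$
fall out of the same calculation, with the cutoff function $\eta$ fixed once and for all.
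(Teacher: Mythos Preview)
Your argument is correct and is the standard one: test the multiplier inequality on translated cut-offs, use translation invariance of the $H^s_p$ norms, and invoke the duality of Lemma~\ref{multipliers_symmetry} to obtain the second half of the intersection. The present paper does not give its own proof of this lemma at all --- it is quoted verbatim from \cite[Proposition~2]{BelShk} --- so there is nothing to compare against here; your proof is presumably the one given in that reference.

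One cosmetic slip: in your final displayed estimate the constant in front of $\|\mu\|_{M[H^a_b\to H^c_d]}$ should be $\max\bigl(\|\eta\|_{H^a_b(\mathbb{R}^n)},\,\|\eta\|_{H^{-c}_{d'}(\mathbb{R}^n)}\bigr)$, since the bound on $\|\mu\|_{H^{-a}_{b',\,unif}}$ comes from applying the one-sided argument to the dual multiplier space $M[H^{-c}_{d'}\to H^{-a}_{b'}]$ and hence involves $\|\eta\|_{H^{-c}_{d'}}$ rather than $\|\eta\|_{H^a_b}$. This does not affect the continuity of the embedding, which is all the lemma asserts.
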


\begin{lemma}{\cite[Лемма 2]{BelShk}}\label{equal_norm}
Пусть $s, \: t \in \mathbb{R}, \; 1 < p \leqslant q.$ Тогда имеет место совпадение пространств
$$
M[H^s_p(\mathbb{R}^n) \to H^t_q(\mathbb{R}^n)] = M_{unif}[H^s_p(\mathbb{R}^n) \to H^t_q(\mathbb{R}^n)],
$$
причем нормы этих пространств эквивалентны.
\end{lemma}

\begin{lemma}{\cite[Лемма 3]{BelShk}}\label{simultaneously}
{\sl Пусть $s, \: t \in \mathbb{R}, \; p, \: q > 1$ и $p \leqslant q$. Если для некоторых чисел $\gamma \in \mathbb{R}, \: r > 1$ справедливо непрерывное вложение
\begin{equation}\label{first_embedding}
H^{\gamma}_r(\mathbb{R}^n) \subset M[H^s_p(\mathbb{R}^n) \to H^t_q(\mathbb{R}^n)],
\end{equation}
то также справедливо непрерывное вложение
\begin{equation}\label{second_embedding}
H^{\gamma}_{r, \: unif}(\mathbb{R}^n) \subset M[H^s_p(\mathbb{R}^n) \to H^t_q(\mathbb{R}^n)].
\end{equation}
}
\end{lemma}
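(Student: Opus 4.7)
The plan is as follows. First I would reduce to a statement about the uniformly localized multiplier space via Lemma~\ref{equal_norm}: since $p \leq q$, this lemma asserts
$$
M[H^s_p(\mathbb{R}^n) \to H^t_q(\mathbb{R}^n)] = M_{unif}[H^s_p(\mathbb{R}^n) \to H^t_q(\mathbb{R}^n)]
$$
with equivalent norms, so it is enough to prove
$$
H^{\gamma}_{r, \: unif}(\mathbb{R}^n) \subset M_{unif}[H^s_p(\mathbb{R}^n) \to H^t_q(\mathbb{R}^n)].
$$

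Next I would fix $u \in H^{\gamma}_{r, \: unif}(\mathbb{R}^n)$ and, for each $z \in \mathbb{R}^n$, observe that the cut-off $\eta_{(z)} \cdot u$ has compact support and, by Definition~\ref{loc_unif}, belongs to $H^{\gamma}_r(\mathbb{R}^n)$ with norm controlled by $\| u \|_{H^{\gamma}_{r, \: unif}(\mathbb{R}^n)}$, uniformly in $z$. The hypothesis \eqref{first_embedding}, together with the closed graph theorem applied to the Banach spaces $H^{\gamma}_r(\mathbb{R}^n)$ and $M[H^s_p(\mathbb{R}^n) \to H^t_q(\mathbb{R}^n)]$, would furnish a constant $C_0$ such that
$$
\| v \|_{M[H^s_p(\mathbb{R}^n) \to H^t_q(\mathbb{R}^n)]} \leq C_0 \, \| v \|_{H^{\gamma}_r(\mathbb{R}^n)} \quad \forall \, v \in H^{\gamma}_r(\mathbb{R}^n).
$$
Applying this estimate with $v = \eta_{(z)} \cdot u$, taking the supremum over $z \in \mathbb{R}^n$, and consulting Definition~\ref{uniform_multipliers}, I would obtain $u \in M_{unif}[H^s_p(\mathbb{R}^n) \to H^t_q(\mathbb{R}^n)]$ together with the bound
$$
\| u \|_{M_{unif}[H^s_p(\mathbb{R}^n) \to H^t_q(\mathbb{R}^n)]} \leq C_0 \, \| u \|_{H^{\gamma}_{r, \: unif}(\mathbb{R}^n)}.
$$
A final application of Lemma~\ref{equal_norm} in the reverse direction would then deliver \eqref{second_embedding}.

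I do not anticipate any real obstacle here. All of the non-trivial translation-invariant analysis has been absorbed into Lemma~\ref{equal_norm}, and what remains is a purely formal passage from a global embedding of $H^{\gamma}_r$ into the multiplier space to its uniformly localized counterpart. The single point requiring minimal care is the continuity of the set-theoretic inclusion \eqref{first_embedding}; for this the closed graph theorem applies directly, since both spaces embed continuously into $D'(\mathbb{R}^n)$ and the inclusion is therefore closed.
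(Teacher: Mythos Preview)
Your argument is correct and is precisely the intended one. The paper does not give its own proof of this lemma (it is quoted from \cite{BelShk}), but the very same reduction---localize by $\eta_{(z)}$, apply the global embedding, take the supremum in $z$, and invoke Lemma~\ref{equal_norm}---is carried out verbatim in Step~5 of the proof of Theorem~1, confirming that this is exactly the mechanism the authors have in mind. One small remark: the hypothesis \eqref{first_embedding} is already stated as a \emph{continuous} embedding, so the appeal to the closed graph theorem is unnecessary (though not incorrect).
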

Отметим, что  обратное утверждение остаётся верным, даже если в условиях леммы \ref{simultaneously} отказаться от ограничения $p \leqslant q$. А именно, из справедливости при некоторых $\gamma \in \mathbb{R}, \: r > 1$ непрерывного вложения \eqref{second_embedding} следует справедливость непрерывного вложения \eqref{first_embedding}.

\begin{lemma}\label{equal_lemma_new}
{\sl Пусть $\gamma, \: s, \: t \geqslant 0$ и $p, \: q, \: r > 1.$ Тогда следующие условия эквивалентны:
\begin{align*}
1) & \: \exists \: C > 0 \colon \; \| f \cdot \mathbf{g} \|_{H^{\gamma}_r(\mathbb{R}^n)} \leqslant C \: \| \mathbf{f} \|_{H^s_p(\mathbb{R}^n)} \; \| \mathbf{g} \|_{H^t_q(\mathbb{R}^n)} \; \; \forall \; f, \: g \in D(\mathbb{R}^n);\\
2) & \; H^s_p(\mathbb{R}^n) \; \text{непрерывно вложено в} \; M[H^t_q(\mathbb{R}^n) \to H^{\gamma}_r(\mathbb{R}^n)];\\
3) & \; H^t_q(\mathbb{R}^n) \; \text{непрерывно вложено в} \; M[H^s_p(\mathbb{R}^n) \to H^{\gamma}_r(\mathbb{R}^n)];\\
4) & \; H^{-\gamma}_{r'}(\mathbb{R}^n) \; \text{непрерывно вложено в} \; M[H^s_p(\mathbb{R}^n) \to H^{-t}_{q'}(\mathbb{R}^n)].
\end{align*}
}\end{lemma}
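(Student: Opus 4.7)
The plan is to establish the chain $1) \Leftrightarrow 2) \Leftrightarrow 3)$ directly from the definition of the multiplier norm and the density of $\mathbf{D}(\mathbb R^n)$ in every Bessel potential space, and to derive $1) \Leftrightarrow 4)$ from the duality between $H^\gamma_r(\mathbb R^n)$ and $H^{-\gamma}_{r'}(\mathbb R^n)$ (and between $H^t_q$ and $H^{-t}_{q'}$).

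For $1) \Rightarrow 2)$, the inequality in 1) says precisely that for each fixed $f \in D(\mathbb R^n)$ the operator of multiplication by $f$ is bounded from the dense subspace $\mathbf{D}(\mathbb R^n) \subset H^t_q(\mathbb R^n)$ into $H^\gamma_r(\mathbb R^n)$ with norm at most $C\,\|\mathbf f\|_{H^s_p}$, so that $\mathbf f \in M[H^t_q \to H^\gamma_r]$ with the same bound. An arbitrary $\mathbf f \in H^s_p(\mathbb R^n)$ is handled by approximating it in $H^s_p$-norm by elements $\mathbf f_n \in \mathbf{D}(\mathbb R^n)$: the bilinear estimate makes $\{\mathbf f_n\}$ Cauchy in $M[H^t_q \to H^\gamma_r]$, the multiplier limit coincides with $\mathbf f$ on test functions by continuity of the duality pairing, and the resulting bound is $C\,\|\mathbf f\|_{H^s_p}$, i.e.\ 2). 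The converse $2) \Rightarrow 1)$ is immediate by substituting $\mathbf g \in \mathbf{D} \subset H^t_q$ into the inequality defining the multiplier norm. Exchanging the roles of $f$ and $g$ in 1), which is formally symmetric in the two factors, delivers $1) \Leftrightarrow 3)$.

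The implication $1) \Leftrightarrow 4)$ is obtained by duality. Using $(H^\gamma_r)^* = H^{-\gamma}_{r'}$ and $(H^t_q)^* = H^{-t}_{q'}$ with respect to the pairings $\langle\cdot,\cdot\rangle_\gamma$ and $\langle\cdot,\cdot\rangle_t$, and the fact that on test functions these reduce to $\langle\cdot,\cdot\rangle_0$, the bilinear estimate 1) is equivalent (modulo a harmless conjugation of one factor) to the trilinear bound
$$
\bigl|\langle \mathbf h \cdot f,\, \mathbf g\rangle_0\bigr| \; \leq \; C\,\|\mathbf f\|_{H^s_p}\,\|\mathbf g\|_{H^t_q}\,\|\mathbf h\|_{H^{-\gamma}_{r'}} \qquad \forall\, f, g, h \in D(\mathbb R^n).
$$
Taking the supremum over $\mathbf g \in \mathbf{D}$ with $\|\mathbf g\|_{H^t_q} \leq 1$ converts the left-hand side into $\|\mathbf h \cdot f\|_{H^{-t}_{q'}}$; the resulting inequality, extended from $h \in D$ to $u \in H^{-\gamma}_{r'}$ by the same density and completeness argument as in the first step, is precisely the embedding 4). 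The reverse direction $4) \Rightarrow 1)$ is obtained by reading the chain backwards.

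The only delicate point is bookkeeping: one must verify that the products $\mathbf f \cdot \mathbf g$ (in 2), 3)) and $f \cdot u$ for $u \in H^{-\gamma}_{r'}$ (in 4)) obtained by continuous extension from $\mathbf{D}$ coincide with the intrinsic distributional multiplication on the common dense subspace, so that the various continuous extensions really do describe the same multiplication operation that appears in statement 1). This is automatic from the uniqueness of continuous extensions from a dense subspace together with the density of $\mathbf{D}(\mathbb R^n)$ in every Bessel potential space, noted in $\S 2$ of the paper.
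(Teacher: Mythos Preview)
Your argument is correct and proceeds along the natural lines: density of $\mathbf D(\mathbb R^n)$ in every $H^\sigma_\rho(\mathbb R^n)$ for the equivalences $1)\Leftrightarrow 2)\Leftrightarrow 3)$, and the duality $(H^\gamma_r)^*=H^{-\gamma}_{r'}$, $(H^t_q)^*=H^{-t}_{q'}$ reduced on test functions to the $\langle\cdot,\cdot\rangle_0$ pairing for $1)\Leftrightarrow 4)$. The paper itself does not give a self-contained proof here: it simply refers to \cite[Lemma~4]{BelShk} for $1)\Leftrightarrow 2)\Leftrightarrow 3)$ and to \cite[Lemma~4]{Bel2} for $1)\Leftrightarrow 4)$, and your sketch is precisely the standard argument those references carry out, so there is no methodological difference---you have merely unpacked what the paper leaves to the literature.

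One small point worth tightening: in the step $1)\Rightarrow 2)$ you invoke completeness of $M[H^t_q\to H^\gamma_r]$ to pass to the limit of $\{\mathbf f_n\}$. It is slightly cleaner (and avoids having to justify that completeness) to argue pointwise in $g$: for each fixed $g\in D(\mathbb R^n)$ the sequence $g\cdot\mathbf f_n$ is Cauchy in $H^\gamma_r(\mathbb R^n)$ by the bilinear estimate, and its $H^\gamma_r$-limit must agree with the distributional product $g\cdot u$ since $\mathbf f_n\to u$ in $S'(\mathbb R^n)$ forces $g\cdot\mathbf f_n\to g\cdot u$ in $D'(\mathbb R^n)$. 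The same remark applies to the extension step in $1)\Rightarrow 4)$.
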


\begin{proof}
Эквивалентность условий $1), \: 2)$ и $3)$ была доказана в ~\cite[Лемма 4]{BelShk}, а эквивалентность условий 1) и 4) доказана в \cite[Lemma 4]{Bel2}.
\end{proof}

\begin{lemma}\label{unif_embedding_criterion}
Пусть $\gamma, \: s, \: t \geqslant 0, \; p, \: q, \: r > 1$ и $p \leqslant q'$. Тогда непрерывное вложение
$$
H^{-\gamma}_{r', \: unif}(\mathbb{R}^n) \subset M[H^s_p(\mathbb{R}^n) \to H^{-t}_{q'}(\mathbb{R}^n)]
$$
имеет место в том и только в том случае, когда выполняется мультипликативная оценка
$$
\| f \cdot \mathbf{g} \|_{H^{\gamma}_r(\mathbb{R}^n)} \leqslant C_0 \: \| \mathbf{f} \|_{H^s_p(\mathbb{R}^n)} \; \| \mathbf{g} \|_{H^t_q(\mathbb{R}^n)} \; \; \forall \; f, \: g \in D(\mathbb{R}^n).
$$
\end{lemma}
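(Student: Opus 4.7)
My plan is to recognise the statement as a direct combination of three tools already assembled in \S\,2: the equivalence $1)\Leftrightarrow 4)$ in Lemma~\ref{equal_lemma_new}, the ``non-uniform implies uniform'' principle of Lemma~\ref{simultaneously}, and the trivial continuous inclusion $H^{\gamma}_{r}(\mathbb R^n)\subset H^{\gamma}_{r,\,unif}(\mathbb R^n)$ from Remark~\ref{unif_embedding}. The key observation is that the pointwise bilinear estimate in the statement is precisely condition $1)$ of Lemma~\ref{equal_lemma_new} for the given exponents, while the inclusion in $M[H^s_p\to H^{-t}_{q'}]$ is the uniformly localised analogue of condition $4)$. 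Thus only the passage between the uniform and the non-uniform multiplier inclusions needs an extra ingredient, which is exactly Lemma~\ref{simultaneously}.

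For the necessity direction I would start from the product estimate. By the implication $1)\Rightarrow 4)$ of Lemma~\ref{equal_lemma_new} it gives the continuous (non-uniform) embedding
\[
H^{-\gamma}_{r'}(\mathbb R^n)\subset M[H^s_p(\mathbb R^n)\to H^{-t}_{q'}(\mathbb R^n)].
\]
Since the target multiplier space has integrability exponent $q'$, the assumption $p\le q'$ of the present lemma is exactly the hypothesis ``$p\le q$'' of Lemma~\ref{simultaneously} applied to this multiplier class. That lemma then upgrades the embedding to the uniformly localised version
\[
H^{-\gamma}_{r',\,unif}(\mathbb R^n)\subset M[H^s_p(\mathbb R^n)\to H^{-t}_{q'}(\mathbb R^n)],
\]
which is the desired conclusion.

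For the sufficiency direction I would walk back down the same chain. Remark~\ref{unif_embedding} provides the continuous inclusion $H^{-\gamma}_{r'}(\mathbb R^n)\subset H^{-\gamma}_{r',\,unif}(\mathbb R^n)$, so the assumed uniform embedding at once yields the corresponding non-uniform one with a controlled operator norm. The implication $4)\Rightarrow 1)$ of Lemma~\ref{equal_lemma_new} then delivers the pointwise bilinear estimate with some constant $C_0$ depending only on the embedding constants.

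I do not anticipate any substantial obstacle; the only point worth attention is the matching of exponents when invoking Lemma~\ref{simultaneously}. Its hypothesis ``$p\le q$'' must be read with $q$ replaced by $q'$, since the target of our multiplier class is $H^{-t}_{q'}$, and this is precisely why the statement we are proving requires $p\le q'$ rather than $p\le q$. Apart from this bookkeeping, the proof is merely a concatenation of already established results.
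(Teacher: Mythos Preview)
Your argument is correct and matches the paper's own proof, which simply cites Lemma~\ref{simultaneously} and Lemma~\ref{equal_lemma_new} (the trivial reverse inclusion you draw from Remark~\ref{unif_embedding} is exactly the remark recorded right after Lemma~\ref{simultaneously}). The only slip is terminological: what you label ``necessity'' is in fact the sufficiency of the bilinear estimate for the embedding, and vice versa.
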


\begin{proof}
Это утверждение является следствием Леммы \ref{simultaneously} и Леммы \ref{equal_lemma_new}.
\end{proof}

\begin{lemma}{\cite[Лемма 6]{BelShk}}\label{multi_inequality}
Пусть $p, \: q > 1, \; s, \: t \geqslant 0, \; s > n/p$ и выполняется одно из двух условий:
$$
a) \; p \leqslant q, \; \; s - \frac n p \geqslant t - \frac n q \; ; \; \; \; b) \; p \geqslant q, \; s \geqslant t.
$$
Тогда справедлива мультипликативная оценка
$$
\| f \cdot \mathbf{g} \|_{H^t_q(\mathbb{R}^n)} \leqslant C \: \| \mathbf{f} \|_{H^s_p(\mathbb{R}^n)} \; \| \mathbf{g} \|_{H^t_q(\mathbb{R}^n)} \;\; \forall \; f, \: g \in D(\mathbb{R}^n),
$$
где константа $C > 0$ не зависит от выбора $f$ и $g$.
\end{lemma}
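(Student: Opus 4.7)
The plan is to establish the bilinear estimate at two endpoint values of $t$ and then recover all intermediate values by complex interpolation (see the interpolation remark above). I would take as endpoints $t = 0$ and $t = N$, where $N$ is any fixed non-negative integer with $N \geq t$.

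At the first endpoint $t = 0$, the assumption $s > n/p$ together with the Sobolev embedding theorem gives the continuous inclusion $H^s_p(\mathbb{R}^n) \subset L_\infty(\mathbb{R}^n)$, so that H\"older's inequality yields
\[
\| f \cdot \mathbf{g} \|_{L_q(\mathbb{R}^n)} \leq \| f \|_{L_\infty(\mathbb{R}^n)} \| \mathbf{g} \|_{L_q(\mathbb{R}^n)} \leq C \| \mathbf{f} \|_{H^s_p(\mathbb{R}^n)} \| \mathbf{g} \|_{L_q(\mathbb{R}^n)}
\]
for all $f, g \in D(\mathbb{R}^n)$. This settles both cases (a) and (b) when $t = 0$.

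At the second endpoint $t = N$, I would apply the Leibniz formula to $D^\alpha(f g)$ for each $|\alpha|_1 \leq N$, and then bound each summand $(D^\beta f)(D^{\alpha - \beta} g)$ in $L_q(\mathbb{R}^n)$ via H\"older's inequality with a pair of auxiliary conjugate exponents $r_\beta, r_\beta'$ chosen according to the size of $|\beta|_1$. When $|\beta|_1$ is small enough that $s - |\beta|_1 > n/p$, the factor $D^\beta f \in H^{s - |\beta|_1}_p$ still lies in $L_\infty(\mathbb{R}^n)$ and the term is controlled directly by $\| \mathbf{f} \|_{H^s_p} \| \mathbf{g} \|_{H^N_q}$; when $|\beta|_1$ is larger, I would trade smoothness of $D^\beta f$ for integrability via $H^{s - |\beta|_1}_p \subset L_{r_\beta}$ and pair it with $D^{\alpha - \beta} g \in H^{N - |\alpha - \beta|_1}_q \subset L_{r_\beta'}$. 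The scaling conditions $s - n/p \geq t - n/q$ in case (a) and $p \geq q$, $s \geq t$ in case (b) provide exactly the room needed to select $r_\beta$ compatibly with both Sobolev embeddings.

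Knowing now that multiplication by $\mathbf{f}$ is bounded on $H^0_q(\mathbb{R}^n)$ and on $H^N_q(\mathbb{R}^n)$ with operator norm at most $C \| \mathbf{f} \|_{H^s_p(\mathbb{R}^n)}$ in each case, the complex interpolation theorem for Bessel potential spaces and the multiplicative operator-norm estimate recalled in the interpolation remark give the same bound on $H^t_q(\mathbb{R}^n)$ for every intermediate $t \in [0, N]$; in case (a) one may need to let the Lebesgue exponent at the lower endpoint vary slightly so as to land on $(t, q)$ exactly along the admissible Sobolev line. The principal obstacle in this plan is the combinatorial bookkeeping at the integer endpoint $t = N$ in case (a): the exponents $r_\beta$ must be chosen consistently across all multi-indices $\beta \leq \alpha$ so that each Sobolev embedding invoked is admissible and the H\"older estimates close at $L_q(\mathbb{R}^n)$. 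Once this balancing is carried out, the interpolation step is essentially automatic.
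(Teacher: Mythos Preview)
This lemma is quoted in the present paper from the authors' earlier work \cite[Lemma~6]{BelShk} and is not proved here, so there is no proof in this paper to compare your attempt against. I can therefore only comment on whether your argument stands on its own.

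There is a genuine gap, and it is not merely bookkeeping. Your Leibniz step at the integer endpoint $t=N$ relies on the embedding $H^{\,s-|\beta|_1}_p(\mathbb{R}^n)\subset L_{r_\beta}(\mathbb{R}^n)$ to control $D^\beta f$. Such an embedding into a Lebesgue space exists only when $s-|\beta|_1\geqslant 0$; as soon as $|\beta|_1>s$ the space $H^{\,s-|\beta|_1}_p$ has negative order and consists of distributions, so no estimate of $\|D^\beta f\|_{L_{r_\beta}}$ by $\|\mathbf f\|_{H^s_p}$ is available. Hence the Leibniz argument at level $N$ works only if $N\leqslant s$. Combined with the requirement $N\geqslant t$ (so that interpolation between $0$ and $N$ actually reaches $t$), you need an integer in $[t,s]$. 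This can fail already in the simplest situation: take $n=1$, $p=q$ large, and $0<t\leqslant s<1$ with $s>1/p$; both hypotheses (a) and (b) hold, yet there is no positive integer $\leqslant s$, and your scheme produces nothing beyond the $t=0$ estimate.

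The difficulty is structural: the Leibniz rule only gives integer-order endpoints, while the assertion is genuinely fractional. A correct proof has to supply a fractional upper endpoint---for instance via the Strichartz algebra property $H^s_p\cdot H^s_p\subset H^s_p$ for $s>n/p$ (valid for all real $s$), or via a paraproduct decomposition---and then interpolate with a lower endpoint whose Lebesgue index is chosen to hit $(t,q)$ exactly. Your last remark about varying the Lebesgue exponent points in this direction, but even with that adjustment the pair of endpoints you describe does not cover all of cases (a) and (b) simultaneously; matching the interpolation line to $(t,q)$ under the stated hypotheses requires additional case analysis that your outline does not provide.
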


\bigskip

\section{ТРИ ЛЕММЫ}

Докажем лемму о дифференцировании мультипликаторов, которая играет важную роль в доказательстве основного результата этой работы. Утверждение этой леммы известно в случае, когда мультипликатор действует между пространствами с положительными индексами гладкости (см., например, \cite[Lemma 3.2.4]{MShbook}). Более того, условие
$$
\mu \in M[H^{s - m}_p(\mathbb{R}^n) \to H^{t - m}_q(\mathbb{R}^n)],
$$
которое фигурирует ниже в формулировке леммы, оказывается излишним при $s \geqslant t \geqslant m$. Но в случае $t - m < 0$ мы не можем получить нужный результат без этого дополнительного условия. 

\begin{lemma}\label{multipliers_differentiation}
Пусть $s, \: t \in \mathbb{R}, \: m \in \mathbb{Z}_{+}, \; p, \: q > 1$ и
\begin{equation}\label{mu}
\mu \in M[H^s_p(\mathbb{R}^n) \to H^t_q(\mathbb{R}^n)] \cap M[H^{s - m}_p(\mathbb{R}^n) \to H^{t - m}_q(\mathbb{R}^n)].
\end{equation}
Тогда
$$
D^{\alpha}(\mu) \in M[H^s_p(\mathbb{R}^n) \to H^{t - m}_q(\mathbb{R}^n)] \; \; \forall \: \alpha \in \mathbb{Z}_+^n: |\alpha|_1 \leqslant m,
$$
причём
$$
\| D^{\alpha}(\mu) \|_{M[H^s_p(\mathbb{R}^n) \to H^{t - m}_q(\mathbb{R}^n)]} \leqslant C \: \| \mu \|_{M[H^s_p(\mathbb{R}^n) \to H^t_q(\mathbb{R}^n)] \cap M[H^{s - m}_p(\mathbb{R}^n) \to H^{t - m}_q(\mathbb{R}^n)]}.
$$
\end{lemma}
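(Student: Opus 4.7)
The plan is to reduce the statement to an inductive argument based on the Leibniz rule, bootstrapped by first interpolating the two given multiplier properties of $\mu$ to obtain a whole family of intermediate ones.

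As a preliminary step, I would apply the interpolation assertion in Remark \ref{interpolation} to the multiplication operator $T_\mu \colon f \mapsto f \cdot \mu$. By hypothesis $T_\mu$ is bounded both as $H^s_p(\mathbb{R}^n) \to H^t_q(\mathbb{R}^n)$ and as $H^{s-m}_p(\mathbb{R}^n) \to H^{t-m}_q(\mathbb{R}^n)$; since $\mathbf{D}(\mathbb{R}^n)$ is dense in every Bessel potential space, the two bounded extensions agree on $H^s_p(\mathbb{R}^n) \cap H^{s-m}_p(\mathbb{R}^n)$. Complex interpolation at $\theta = k/m$, together with the endpoints $k = 0$ and $k = m$, therefore yields
$$
\mu \in M[H^{s-k}_p(\mathbb{R}^n) \to H^{t-k}_q(\mathbb{R}^n)], \qquad k = 0, 1, \dots, m,
$$
with norm controlled by a constant multiple of $\|\mu\|_{M[H^s_p \to H^t_q] \cap M[H^{s-m}_p \to H^{t-m}_q]}$.

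The core of the argument would be the following strengthened two-parameter claim: for every $\gamma \in \mathbb{Z}_+^n$ and every integer $k \geq 0$ with $|\gamma|_1 + k \leq m$,
$$
D^{\gamma}\mu \in M[H^{s-k}_p(\mathbb{R}^n) \to H^{t-m}_q(\mathbb{R}^n)]
$$
with norm dominated by the same quantity; the lemma is the instance $k = 0$, $\gamma = \alpha$. I would prove this by induction on $|\gamma|_1$. The base case follows from the preliminary step combined with the embedding $H^{t-k}_q \subset H^{t-m}_q$ from Remark \ref{em}. For the inductive step, fix $f \in D(\mathbb{R}^n)$ and use the Leibniz rule for distributions to write
$$
f \cdot D^{\gamma}\mu \; = \; D^{\gamma}(f \cdot \mu) \; - \sum_{\substack{\beta \leq \gamma \\ \beta \neq 0}} \binom{\gamma}{\beta}\, D^{\beta} f \cdot D^{\gamma - \beta}\mu.
$$
The principal term is estimated by composing the preliminary multiplier bound $\|f \cdot \mu\|_{H^{t-k}_q} \leq C \|f\|_{H^{s-k}_p}$ with the continuity of $D^{\gamma} \colon H^{t-k}_q \to H^{t-k-|\gamma|_1}_q \subset H^{t-m}_q$ from Remark \ref{Bessel_differentiation} (valid because $k + |\gamma|_1 \leq m$). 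Each summand satisfies $|\beta|_1 \geq 1$, $|\gamma - \beta|_1 < |\gamma|_1$, and $(k + |\beta|_1) + |\gamma - \beta|_1 = k + |\gamma|_1 \leq m$; applying the induction hypothesis to $D^{\gamma-\beta}\mu$ with the shifted index $k + |\beta|_1$ then bounds $D^{\beta} f \cdot D^{\gamma-\beta}\mu$ in $H^{t-m}_q$ by $C \|D^{\beta} f\|_{H^{s-k-|\beta|_1}_p}$, which in turn is controlled by $\|f\|_{H^{s-k}_p}$ via Remark \ref{Bessel_differentiation}.

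The main obstacle is choosing the right inductive hypothesis: the more natural statement $D^{\gamma}\mu \in M[H^s_p \to H^{t-m}_q]$ is not self-reproducing, because Leibniz forces multiplication of $D^{\gamma-\beta}\mu$ by $D^{\beta} f$, which lives only in $H^{s-|\beta|_1}_p$. Introducing the auxiliary parameter $k$ absorbs precisely this loss, and the arithmetic closes under the single constraint $|\gamma|_1 + k \leq m$. Apart from this bookkeeping the argument relies only on the Leibniz rule for distributions (legitimate because $D^{\beta} f$ is smooth and compactly supported) together with the standard differentiation and embedding properties for Bessel potential spaces recorded in Section 2.
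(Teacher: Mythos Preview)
Your argument is correct. Both your proof and the paper's begin with the same interpolation step (Remark~\ref{interpolation}) to obtain $\mu \in M[H^{s-k}_p \to H^{t-k}_q]$ for all integers $0\le k\le m$, and both finish via the Leibniz rule; the difference lies in how the induction is organized. The paper inducts on $m$: it writes $D^{\alpha} = D^{\alpha_2}\circ D^{\alpha_1}$ with $|\alpha_1|_1=1$, applies the base case $m=1$ twice (once at level $(s,t)$, once at level $(s-m+1,t-m+1)$) to show that $D^{\alpha_1}\mu$ again satisfies the two-sided hypothesis~\eqref{mu} with $m$ replaced by $m-1$, and then invokes the induction hypothesis on $D^{\alpha_1}\mu$. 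Thus the paper only ever uses the first-order Leibniz identity and iterates. You instead strengthen the statement to the two-parameter family $D^{\gamma}\mu \in M[H^{s-k}_p \to H^{t-m}_q]$ for $|\gamma|_1+k\le m$ and induct on $|\gamma|_1$, using the full multi-index Leibniz formula at each step; the auxiliary index $k$ is exactly what absorbs the loss of smoothness in $D^{\beta}f$. Your route avoids the nested application of the base case and makes the bookkeeping explicit, at the cost of carrying the extra parameter; the paper's route keeps the Leibniz step minimal but requires seeing that $D^{\alpha_1}\mu$ lands back in a pair of multiplier spaces of the same shape.
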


\begin{proof}
Достаточно провести доказательство леммы для мультииндексов $\alpha$, таких, что   $|\alpha|_1 = m$. При $|\alpha|_1 < m$ утверждение будет следовать из Замечания \ref{em} и Замечания \ref{interpolation}.

Для сокращения записи далее для пространств мультипликаторов будем использовать обозначение
$$
X^{s, k}_{t, m}(p, q) \stackrel{def}{=} M[H^{s - k}_p(\mathbb{R}^n) \to H^{t - m}_q(\mathbb{R}^n)],
$$
причём в дальнейшем мы будем опускать индексы $p$ и $q$, поскольку в ходе доказательства леммы они фиксированы.

Проведём доказательство леммы индукцией по индексу $m \: = \: | \alpha |_1.$

Пусть $m \: = \: | \alpha |_1  \: = \: 1$ и
$$
\mu \in X^{s, 0}_{t, 0} \cap X^{s, 1}_{t, 1}.
$$
Тогда $D^{\alpha} \: = \partial/\partial x_i $ при некотором $i \in \{ 1,...,n \}$ и, как легко видеть,
$$
f \cdot D^{\alpha}(\mu ) \: = \:  D^{\alpha}(f \cdot \mu) -  D^{\alpha}(f) \cdot \mu \; \; \; \forall \: f \in D(\mathbb{R}^n).
$$

В силу замечания \ref{Bessel_differentiation} для произвольных чисел $\gamma \in \mathbb{R}$ и $r > 1$ дифференциальный оператор $D^{\alpha} \colon H^{\gamma}_r(\mathbb{R}^n) \to H^{\gamma - 1}_r(\mathbb{R}^n)$ является ограниченным. Поскольку при $m = 1$ выполнено условие \eqref{mu}, то
$$
f \cdot D^{\alpha}(\mu ) \: = \: D^{\alpha}(f \cdot \mu) - D^{\alpha}(f) \cdot \mu \in H^{t - 1}_q(\mathbb{R}^n) \; \; \forall \: f \in D(\mathbb{R}^n),
$$
причём для произвольной функции $f \in D(\mathbb{R}^n)$ справедлива оценка
$$
\| f \cdot D^{\alpha}(\mu) \|_{H^{t - 1}_q(\mathbb{R}^n)} \leqslant \| D^{\alpha}(f \cdot \mu) \|_{H^{t - 1}_q(\mathbb{R}^n)} + \| D^{\alpha}(f) \cdot \mu \|_{H^{t - 1}_q(\mathbb{R}^n)} \leqslant
$$
$$
\leqslant \: (C_1 + C_2) \| \mu \|_{X^{s, 0}_{t, 0} \cap X^{s, 1}_{t, 1}}  \cdot \| \mathbf{f} \|_{H^s_p(\mathbb{R}^n)},
$$
где $C_1$  и $C_2$ --- нормы операторов
$$
D^{\alpha}:\, H^t_q(\mathbb{R}^n) \to H^{t - 1}_q(\mathbb{R}^n), \quad  D^{\alpha}: \, H^s_p(\mathbb{R}^n) \to  H^{s - 1}_p(\mathbb{R}^n).
$$

Из определения пространства мультипликаторов получаем, что
$$
D^{\alpha} (\mu) \in X^{s, 0}_{t, 1}
$$
и при $C_3 = C_1 + C_2$ справедлива оценка
$$
\| D^{\alpha} (\mu) \|_{X^{s, 0}_{t, 1}} \leqslant C_3 \: \| \mu \|_{X^{s, 0}_{t, 0} \cap X^{s, 1}_{t, 1}}.
$$
Это завершает доказательство леммы при $m = 1.$

Предположим, что утверждение уже доказано для всех мультииндексов $\alpha \in \mathbb{Z}^n_+,$ таких, что $| \alpha |_1 \leqslant m - 1.$ Рассмотрим произвольный мультииндекс $\alpha \in \mathbb{Z}^n_+,$ такой, что $| \alpha |_1 = m,$ и произвольное распределение 
$$
\mu \in X^{s, 0}_{t, 0} \cap X^{s, m}_{t, m}.
$$
Тогда распределение $D^{\alpha}(\mu)$ можно представить в виде
$$
D^{\alpha}(\mu) \: = \: D^{\alpha_2}(D^{\alpha_1}(\mu)),
$$
где $\alpha_1, \: \alpha_2 \in \mathbb{Z}^n_+, \; \alpha_1 + \alpha_2 = \alpha, \; | \alpha_1 |_1 = 1$ и $| \alpha_2 |_1 = m - 1.$

Согласно Замечанию \ref{interpolation} имеем
$$
\mu \in X^{s, \theta}_{t, \theta} \quad \forall \: \theta \in (0, m),
$$
причём
$$
\| \mu \|_{X^{s, \theta}_{t, \theta}} \leqslant \| \mu \|_{X^{s, 0}_{t, 0} \cap X^{s, m}_{t, m}}.
$$

В частности, полагая $\theta = 1$ и $\theta = m - 1$ соответственно, получаем
$$
\mu \in X^{s, 0}_{t, 0} \cap X^{s, 1}_{t, 1}
$$
и
$$
\mu \in X^{s, m - 1}_{t, m - 1} \cap X^{s, m}_{t, m} = X^{s - m + 1, 0}_{t - m + 1, 0} \cap X^{s - m + 1, 1}_{t - m + 1, 1}.
$$

Cогласно утверждению базы индукции имеем
$$
D^{\alpha_1}(\mu) \in X^{s, 0}_{t, 1} \cap X^{s - m + 1, 0}_{t - m + 1, 1},
$$
с оценкой норм
$$
\| D^{\alpha_1}(\mu)\|_{X^{s, 0}_{t, 1}} \leqslant C_4 \: \| \mu \|_{X^{s, 0}_{t, 0} \cap X^{s, 1}_{t, 1}}
$$
и
$$
\| D^{\alpha_1}(\mu) \|_{X^{s - m + 1, 0}_{t - m + 1, 1}} \leqslant  C_5 \: \| \mu \|_{X^{s - m + 1, 0}_{t - m + 1, 0} \cap X^{s - m + 1, 1}_{t - m + 1, 1}}.
$$
Из этих двух оценок получаем
$$
\| D^{\alpha_1}(\mu) \|_{X^{s, 0}_{t - 1, 0} \cap X^{s, m - 1}_{t - 1, m - 1}} \leqslant C_6 \cdot \| \mu \|_{X^{s, 0}_{t, 0} \cap X^{s, m}_{t, m}}.
$$

Согласно предположению индукции отсюда получаем
$$
D^{\alpha}(\mu) \:= \: D^{\alpha_2}(D^{\alpha_1}(\mu)) \in X^{s, 0}_{t - 1, m - 1} = X^{s, 0}_{t, m},
$$
с оценкой
\begin{equation*}
\| D^{\alpha}(\mu) \|_{X^{s, 0}_{t, m}} \leqslant C_7 \cdot \: \| D^{\alpha_1}(\mu) \|_{X^{s, 0}_{t - 1, 0} \cap X^{s, m - 1}_{t - 1, m - 1}} \leqslant C_8 \cdot \: \| \mu \|_{X^{s, 0}_{t, 0} \cap X^{s, m}_{t, m}}.
\end{equation*}

\end{proof}

Следующую лемму можно рассматривать как обобщение известного результата (см., например, \cite[с. 74]{McLean}) о представлении произвольного элемента пространства $W^{-k}_p(\mathbb{R}^n)$ в виде дивергенции $k-$го порядка некоторой вектор-функции с компонентами из пространства $W^0_p(\mathbb{R}^n)$.

\begin{lemma}\label{multipliers_representation}
Пусть $k \in \mathbb{N}.$ Тогда существуют непрерывные линейные операторы $A_0, \: A_{\beta}, \; \beta \in \mathbb{Z}_+^n, \: |\beta|_1 = k,$ действующие из $S'(\mathbb{R}^n)$ в $S'(\mathbb{R}^n),$ такие, что для произвольного распределения $u \in S'(\mathbb{R}^n)$ имеем
\begin{equation}\label{Predstav}
u = \sum\limits_{|\beta|_1 \leqslant k} D^{\beta}(A_{\beta}(u)),
\end{equation}
причём для произвольных чисел $s \in \mathbb{R}$ и $p > 1$ ограничения операторов $ A_{\beta}$ на $H^s_p(\mathbb{R}^n)$ являются ограниченными линейными операторами из пространства $H^s_p(\mathbb{R}^n)$ в пространство $H^{s+k}_p(\mathbb{R}^n).$
\end{lemma}
\begin{proof}
Из определения оператора $J_s$ следует, что
$$
J_{2 k} = (Id - \Delta)^k \; \; \forall \: k \in \mathbb{Z}_+,
$$
где $\Delta \colon S'(\mathbb{R}^n) \to S'(\mathbb{R}^n)$ --- оператор Лапласа,  а $Id \colon S'(\mathbb{R}^n) \to S'(\mathbb{R}^n)$ --- тождественный оператор.
Тогда
$$
u = J_{2 k}(J_{-2 k}(u)) = (Id - \Delta)^k(J_{- 2 k}(u)) \; \; \forall \: u \in S'(\mathbb{R}^n).
$$
Поэтому найдется  набор констант $\{ C_{\alpha} \: | \: \alpha \in \mathbb{Z}^n_+, \: |\alpha|_1 \leqslant 2 \: k \}$, такой, что для произвольного распределения $u \in S'(\mathbb{R}^n)$ справедливо представление
$$
u = \sum\limits_{|\alpha|_1 \leqslant 2 k} C_{\alpha} \cdot D^{\alpha}(J_{-2 k}(u)).
$$

Произвольный мультииндекс $\alpha \in \mathbb{Z}_+^n,$ такой, что $ |\alpha|_1 \leqslant 2k,$ можно представить в виде
$$
\alpha = \beta(\alpha) + \gamma(\alpha),
$$
где $\beta(\alpha), \gamma(\alpha) \in \mathbb{Z}^n_+, \; |\beta(\alpha)|_1 \leqslant k$ и $|\gamma(\alpha)|_1 \leqslant k.$ Поэтому $u$ можно записать  в виде
$$
u = \sum\limits_{|\beta|_1 \leqslant k} D^{\beta} \left( \sum\limits_{|\gamma|_1 \leqslant k} C_{\beta, \gamma} \cdot D^{\gamma}(J_{-2 k}(u)) \right) = :\, \sum\limits_{|\beta|_1 \leqslant  k} D^{\beta}(A_{\beta}(u)).
$$

Из этого представления видно, что для завершения доказательства леммы достаточно показать, что операторы
$$
D^\gamma \circ J_{-2k}: \  H^s_p(\mathbb{R}^n) \to H^{s + k}_p(\mathbb{R}^n)
$$
являются ограниченными при условии $|\gamma|_1 \leqslant k$. Действительно, такой оператор представляется в виде композиции операторов
\begin{align*}
J_{-2k} : \ H^s_p(\mathbb{R}^n)\to H^{s + 2k}_p(\mathbb{R}^n),\\
D^\gamma : H^{s+2k}_p(\mathbb{R}^n)\to
H^{s+2k-|\gamma|_1}_p(\mathbb{R}^n),\\
Id:\ H^{s+2k-|\gamma|_1}_p(\mathbb{R}^n)\to H^{s+k}_p(\mathbb{R}^n).
\end{align*}
Первый из этих операторов по определению изометричен, второй ограничен в силу Замечания \ref{Bessel_differentiation}, а ограниченность третьего оператора следует из теоремы вложения Соболева при условии $|\gamma|_1\leqslant k$. 
\end{proof}

\medskip

Как следствие Лемм \ref{multipliers_differentiation} и \ref{multipliers_representation} получаем важный в дальнейшем результат.

\begin{lemma}\label{important}
Пусть $s, \: t \in \mathbb{R}, \: m \in \mathbb{Z}_{+}, \; p, \: q > 1$ и справедливо непрерывное вложение
\begin{multline}\label{a}
H^{m-t }_q(\mathbb{R}^n) \cap H^{m-s}_{p'}(\mathbb{R}^n) \\
 \subset M[H^{s}_p(\mathbb{R}^n) \to H^{m-t}_q(\mathbb{R}^n)] \cap M[H^{s-m}_p(\mathbb{R}^n) \to H^{-t}_q(\mathbb{R}^n)].
\end{multline}
Тогда справедливо непрерывное вложение
\begin{equation}\label{b}
 H^{-t}_q(\mathbb{R}^n) \cap H^{-s}_{p'}(\mathbb{R}^n) \subset M[H^{s}_p(\mathbb{R}^n) \to H^{-t}_q(\mathbb{R}^n)].
\end{equation}
\end{lemma}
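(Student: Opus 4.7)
The plan is to combine the representation formula of Lemma \ref{multipliers_representation} with the differentiation rule for multipliers given in Lemma \ref{multipliers_differentiation}, using the hypothesis (\ref{a}) as the bridge that converts membership in intersections of Bessel spaces into membership in intersections of multiplier spaces.

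Fix an arbitrary $u \in H^{-t}_q(\mathbb{R}^n) \cap H^{-s}_{p'}(\mathbb{R}^n)$ and apply Lemma \ref{multipliers_representation} with $k = m$ to obtain the decomposition
$$
u = \sum_{|\beta|_1 \leqslant m} D^{\beta}(A_{\beta}(u)),
$$
where each operator $A_\beta$ is a bounded linear map raising Bessel smoothness by $m$ on every $H^\gamma_r$-scale. Applying this boundedness with $(\gamma, r) = (-t, q)$ and $(\gamma, r) = (-s, p')$ shows that $A_\beta(u) \in H^{m-t}_q(\mathbb{R}^n) \cap H^{m-s}_{p'}(\mathbb{R}^n)$, with the natural bound by $\|u\|_{H^{-t}_q \cap H^{-s}_{p'}}$.

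Invoking the assumed continuous embedding (\ref{a}) then yields
$$
A_\beta(u) \in M[H^{s}_p(\mathbb{R}^n) \to H^{m-t}_q(\mathbb{R}^n)] \cap M[H^{s-m}_p(\mathbb{R}^n) \to H^{-t}_q(\mathbb{R}^n)].
$$
Reading this with the relabelling $t \leadsto m-t$, one recognizes precisely the hypothesis of Lemma \ref{multipliers_differentiation}. Applying that lemma to $A_\beta(u)$ with the multi-index $\alpha = \beta$ (so $|\alpha|_1 \leqslant m$) gives
$$
D^{\beta}(A_\beta(u)) \in M\bigl[H^{s}_p(\mathbb{R}^n) \to H^{(m-t) - m}_q(\mathbb{R}^n)\bigr] = M[H^{s}_p(\mathbb{R}^n) \to H^{-t}_q(\mathbb{R}^n)]
$$
for every $|\beta|_1 \leqslant m$, together with the corresponding quantitative estimate of the multiplier norm. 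Since the multiplier space is a vector space, summing the finite decomposition gives $u \in M[H^{s}_p(\mathbb{R}^n) \to H^{-t}_q(\mathbb{R}^n)]$, which is exactly (\ref{b}).

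Continuity of the embedding (\ref{b}) follows by concatenating the three norm estimates (boundedness of $A_\beta$, the assumed continuous embedding (\ref{a}), and Lemma \ref{multipliers_differentiation}), and taking the maximum over the finite set of multi-indices $\beta$ with $|\beta|_1 \leqslant m$. No new analytic ingredient is needed; the one point to watch is the bookkeeping of parameters, i.e.\ verifying that the shifted pair $(s, m-t)$ applied in Lemma \ref{multipliers_differentiation} reproduces exactly the pair of multiplier spaces appearing in (\ref{a}). This matching is the only genuine step in the argument, and it has evidently been built into the formulation of the lemma.
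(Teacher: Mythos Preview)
Your proof is correct and follows essentially the same route as the paper: decompose via Lemma~\ref{multipliers_representation}, lift smoothness by $m$ using the boundedness of the $A_\beta$, feed the result through hypothesis~\eqref{a}, and then apply Lemma~\ref{multipliers_differentiation} (with the shift $t \mapsto m-t$) term by term. The paper's argument is identical in structure and detail.
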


\begin{proof}

Пусть
$$
\mu \in H^{-t}_q(\mathbb{R}^n) \cap H^{-s}_{p'}(\mathbb{R}^n).
$$
Тогда в силу Леммы \ref{multipliers_representation} справедливо представление
$$
\mu = \sum\limits_{|\beta|_1 \leqslant m} D^{\beta}(A_{\beta}(\mu)), \quad A_{\beta}(\mu) \in H^{m-t}_q(\mathbb{R}^n) \cap H^{m-s}_{p'}(\mathbb{R}^n),
$$
причем операторы
\begin{equation}\label{c}
A_\beta :  H^{-t}_q(\mathbb{R}^n) \to  H^{m-t}_q(\mathbb{R}^n) \ \ \text{и}\ \
A_\beta :  H^{-s}_{p'}(\mathbb{R}^n) \to  H^{m-s}_{p'}(\mathbb{R}^n)
\end{equation}
ограничены. При $|\beta|_1 \leqslant m$, пользуясь Леммой \ref{multipliers_differentiation}, непрерывностью вложения \eqref{a} и ограниченностью операторов \eqref{c}, получаем
$$
D^\beta(A_\beta(\mu)) \in M[H^s_p(\mathbb{R}^n) \to H^{-t}_q(\mathbb{R}^n)]
$$
и
\begin{multline*}
\|D^\beta(A_\beta(\mu))\|_{M[H^s_p(\mathbb{R}^n) \to H^{-t}_q(\mathbb{R}^n)]} \leqslant \\
\leqslant C_1 \cdot \|A_\beta(\mu)\|_{M[H^s_p(\mathbb{R}^n) \to H^{m - t}_q(\mathbb{R}^n)] \cap M[H^{s - m}_p(\mathbb{R}^n) \to H^{-t}_q(\mathbb{R}^n)]} \leqslant \\
\leqslant C_2 \cdot \|A_\beta(\mu)\|_{H^{m-s}_{p'}(\mathbb{R}^n) \cap H^{m-t}_q(\mathbb{R}^n)} \leqslant C_3 \cdot \|\mu\|_{H^{-s}_{p'}(\mathbb{R}^n) \cap H^{-t}_q(\mathbb{R}^n)}.
\end{multline*}
Поскольку распределение $\mu$ представляется конечной суммой распределений $D^{\beta}(A_\beta(\mu))$, то справедливо вложение \eqref{b} с оценкой норм.
\end{proof}

\section{ДОКАЗАТЕЛЬСТВА ОСНОВНЫХ ТЕОРЕМ}

{\bf Доказательство теоремы 1.} Первое утверждение теоремы  о непрерывном вложении \eqref{emb} следует из леммы \ref{trivial_multipliers_embedding}. Основная трудность состоит в доказательстве обратного непрерывного вложения
\begin{equation}\label{4.1}
H^{-t}_{q, \: unif}(\mathbb{R}^n) \cap H^{-s}_{p', \: unif}(\mathbb{R}^n) \subset M[H^s_p(\mathbb{R}^n) \to H^{-t}_{q}(\mathbb{R}^n)].
\end{equation}

{\it Шаг 1.} Достаточно доказать справедливость непрерывного вложения \eqref{4.1} в случае $s\geqslant t\geqslant 0, \ s> n/p$. Случай $t\geqslant s\geqslant 0, \ t>n/q'$  сводится к предыдущему. Действительно, согласно лемме \ref{multipliers_symmetry} имеет место совпадение пространств
$$
M[H^s_p(\mathbb{R}^n) \to H^{-t}_q(\mathbb{R}^n)] = M[H^t_{q'}(\mathbb{R}^n) \to H^{-s}_{p'}(\mathbb{R}^n)]
$$
и равенство их норм, а неравенства $p \leqslant q$ и $q' \leqslant p'$ равносильны. Поэтому второй случай сводится к первому заменой чисел $s$ и $t$, $p$  и $q'$.

{\it Шаг 2.} Итак, пусть выполнены условия
\begin{equation}\label{10}
s\geqslant t \geqslant 0, \ \  s > n/p,\ \ p \leqslant q.
\end{equation}
В этой ситуации  возможны три случая
\begin{equation}\label{11}
p\geqslant q';
\end{equation}
\begin{equation}\label{12}
p<q' \ \ \text{и} \ \ s-n/p \geqslant t-n/q';
\end{equation}
\begin{equation}\label{13}
p<q' \ \  \text{и} \ \  s-n/p < t-n/q'.
\end{equation}
Рассмотрим отдельно каждый из этих трех случаев.

{\it Шаг 3.} В случае \eqref{11} имеем $q\geqslant p'$  и $-t \geqslant -s$. Тогда, согласно Замечанию \ref{unif_embedding}, имеем непрерывное вложение
\begin{equation}\label{14}
H^{-t}_{q, \: unif}(\mathbb{R}^n) \subset H^{-s}_{p', \: unif}(\mathbb{R}^n).
\end{equation}
Следовательно,
\begin{equation}\label{15}
H^{-t}_{q, \: unif}(\mathbb{R}^n) \cap H^{-s}_{p', \: unif}(\mathbb{R}^n) = H^{-t}_{q, \: unif}(\mathbb{R}^n),
\end{equation}
причём нормы этих пространств эквивалентны.
Так как $p \leqslant q,$ то в силу Леммы \ref{unif_embedding_criterion} непрерывное вложение
\begin{equation}\label{16}
H^{-t}_{q, \: unif}(\mathbb{R}^n) \subset M[H^s_p(\mathbb{R}^n) \to H^{-t}_{q}(\mathbb{R}^n)]
\end{equation}
имеет место тогда и только тогда, когда справедлива мультипликативная оценка
\begin{equation}\label{17}
\| f \cdot \mathbf{g} \|_{H^t_{q'}(\mathbb{R}^n)} \leqslant C \: \| \mathbf{f} \|_{H^s_p(\mathbb{R}^n)} \| \mathbf{g} \|_{H^t_{q'}(\mathbb{R}^n)} \; \; \; \forall \; f, \: g \in D(\mathbb{R}^n).
\end{equation}
Но справедливость этой оценки при $p\geqslant q'$ следует из Леммы \ref{multi_inequality} (реализуется случай b) в этой лемме). Поэтому справедливость непрерывного вложения \eqref{4.1} вытекает из \eqref{15} и \eqref{16}.

{\it Шаг 4.}
В случае \eqref{12} доказательство вложения \eqref{4.1}  проводится аналогично. В этом случае выполняются неравенства
$$
q < p' \ \ \text{и}\ \ -t-n/q \geqslant -s -n/p'.
$$
Тогда в силу Замечания \ref{unif_embedding} имеем \eqref{14} и \eqref{15}, а в силу Леммы \ref{unif_embedding_criterion} справедливость непрерывного вложения \eqref{16} равносильна выполнению мультипликативной оценки \eqref{17}. Но, в силу выполнения условий \eqref{12}, справедливость оценки \eqref{17} также следует из Леммы \ref{multi_inequality} (реализуется случай a) в этой лемме).

{\it Шаг 5.} Остается рассмотреть наиболее трудный случай \eqref{13}. Напомним, что в этом случае выполнены условия
\begin{equation}\label{18}
p < q', \ \ s - n/p < t - n/q', \ \ s \geqslant t \geqslant 0, \ \ s > n/p, \ \ p < q.
\end{equation}

Покажем, что для доказательства непрерывного вложения \eqref{4.1} достаточно доказать справедливость непрерывного вложения
\begin{equation}\label{40}
H^{-t}_q(\mathbb{R}^n) \, \cap \, H^{-s}_{p'}(\mathbb{R}^n) \subset M[H^s_p(\mathbb{R}^n) \to H^{-t}_q(\mathbb{R}^n)].
\end{equation}
Это утверждение аналогично Лемме 4, но применить здесь эту лемму непосредственно не удаётся. Поэтому  приведем независимое доказательство.

Пусть справедливо непрерывное вложение \eqref{40} и $\eta \in D(\mathbb{R}^n)$ --- функция, удовлетворяющая условиям \eqref{def2} из Определения \ref{loc_unif}.

Рассмотрим произвольное распределение $\mu \in H^{-t}_{q, \: unif}(\mathbb{R}^n) \, \cap \, H^{-s}_{p', \: unif}(\mathbb{R}^n).$ Тогда для произвольного $z \in \mathbb{R}^n$ получаем
$$
\eta_{(z)} \cdot \mu \in H^{-t}_q(\mathbb{R}^n) \cap H^{-s}_{p'}(\mathbb{R}^n) \subset M[H^s_p(\mathbb{R}^n) \to H^{-t}_q(\mathbb{R}^n)],
$$
причём
$$
\| \eta_{(z)} \cdot \mu \|_{M[H^s_p(\mathbb{R}^n) \to H^{-t}_q(\mathbb{R}^n)]} \leqslant C_1 \| \eta_{(z)} \cdot \mu \|_{H^{-t}_q(\mathbb{R}^n) \cap H^{-s}_{p'}(\mathbb{R}^n)}
$$
$$
\leqslant C_1 \| \mu \|_{H^{-t}_{q, \: unif}(\mathbb{R}^n) \cap H^{-s}_{p', \: unif}(\mathbb{R}^n)},
$$
где константа $C_1 > 0$ не зависит от выбора $z \in \mathbb{R}^n.$

Переходя к супремуму по $z \in \mathbb{R}^n$ в этой оценке и используя Лемму \ref{equal_norm} (ее условия выполнены, поскольку $p \leqslant q$), получаем
$$
\mu \in M_{unif}[H^s_p(\mathbb{R}^n) \to H^{-t}_q(\mathbb{R}^n)] = M[H^s_p(\mathbb{R}^n) \to H^{-t}_q(\mathbb{R}^n)]
$$
и
$$
\| \mu \|_{M[H^s_p(\mathbb{R}^n) \to H^{-t}_q(\mathbb{R}^n)]} \leqslant C_2 \| \mu \|_{M_{unif}[H^s_p(\mathbb{R}^n) \to H^{-t}_q(\mathbb{R}^n)]} \leqslant
$$
$$
\leqslant C_3 \| \mu \|_{H^{-t}_{q, \: unif}(\mathbb{R}^n) \cap H^{-s}_{p', \: unif}(\mathbb{R}^n)}.
$$
Это и означает справедливость непрерывного вложения \eqref{4.1}.

{\it Шаг 6.}
Итак, нам осталось доказать лишь справедливость непрерывного вложения \eqref{40}. Для этого докажем сначала, что имеют место непрерывные вложения
\begin{equation}\label{41}
H^{n-t}_q(\mathbb{R}^n) \subset M[H^s_p(\mathbb{R}^n) \to H^{n-t}_q(\mathbb{R}^n)],
\end{equation}

\begin{equation}\label{42}
H^{n-s}_{p'}(\mathbb{R}^n) \subset M[H^{s-n}_p(\mathbb{R}^n) \to H^{-t}_q(\mathbb{R}^n)].
\end{equation}
Тогда будет доказана справедливость непрерывного вложения
\begin{multline*}
H^{n-t}_q(\mathbb{R}^n) \, \cap\, H^{n-s}_{p'}(\mathbb{R}^n) \\
 \subset
M[H^s_p(\mathbb{R}^n) \to H^{n-t}_q(\mathbb{R}^n)]\ \cap\  M[H^{s-n}_p(\mathbb{R}^n) \to H^{-t}_q(\mathbb{R}^n)].
\end{multline*}
Из Леммы \ref{important} (используем эту лемму при $m = n$) мы получим тогда справедливость непрерывного вложения \eqref{40}.

Таким образом, для завершения доказательства основного утверждения теоремы нам достаточно доказать справедливость непрерывных вложений \eqref{41} и \eqref{42}.

{\it Шаг 7.} Докажем справедливость непрерывного вложения \eqref{41}. Рассмотрим сначала случай, когда $n - t \geqslant 0.$ В силу Леммы \ref{equal_lemma_new} (используем эквивалентность условий 1) и 3) в этой лемме) справедливость непрерывного вложения \eqref{41} равносильна выполнению мультипликативной оценки
\begin{equation}\label{21}
\| f \cdot \mathbf{g} \|_{H^{n - t}_q(\mathbb{R}^n)} \leqslant \: C_4 \: \| \mathbf{f} \|_{H^s_p(\mathbb{R}^n)} \: \| \mathbf{g} \|_{H^{n - t}_q(\mathbb{R}^n)} \; \; \; \forall \; f, \: g \in D(\mathbb{R}^n).
\end{equation}
Из \eqref{18} следует
\begin{equation}\label{21b}
t> n/q'\ \ \text{и} \ \ s+t > n/p +n/q',
\end{equation}
поэтому с учетом других условий \eqref{18} имеем
\begin{equation}\label{21a}
 n - t \geqslant 0, \ \ s > n/p, \ \ p < q, \ \ s - n/p > n - t - n/q.
\end{equation}
Это влечет выполнение условий Леммы \ref{multi_inequality} (реализуется случай а) в этой лемме). Следовательно, мультипликативная оценка \eqref{21} выполняется, а значит, при $n - t \geqslant 0$  справедливо непрерывное вложение \eqref{41}.

Пусть теперь $n - t < 0$. Тогда, согласно Лемме \ref{equal_lemma_new} (используем эквивалентность условий 1) и 4) в этой лемме), справедливость непрерывного вложения \eqref{41} равносильна выполнению мультипликативной оценки
\begin{equation}\label{22}
\| f \cdot \mathbf{g} \|_{H^{t - n}_{q'}(\mathbb{R}^n)} \leqslant C_5 \: \| \mathbf{f} \|_{H^s_p(\mathbb{R}^n)} \| \mathbf{g} \|_{H^{t - n}_{q'}(\mathbb{R}^n)} \; \;  \; \forall \; f, \: g \in D(\mathbb{R}^n).
\end{equation}
Но эта оценка также справедлива согласно Лемме \ref{multi_inequality}. Действительно, реализуется случай a) в этой лемме, поскольку
$$
t - n > 0, \; s > n/p, \; p \leqslant q' \ \ \text{и} \ \ s - n/p \geqslant t - n - n/q'.
$$
Тем самым непрерывность вложения \eqref{41} доказана вне зависимости от знака $n - t$.

{\it Шаг 8.} Докажем теперь справедливость непрерывного вложения \eqref{42}. Согласно Лемме \ref{multipliers_symmetry} справедливо равенство
$$
M[H^{s-n}_p(\mathbb{R}^n) \to H^{-t}_q(\mathbb{R}^n)] =  M[H^t_{q'}(\mathbb{R}^n) \to H^{n-s}_{p'}(\mathbb{R}^n)],
$$
поэтому достаточно доказать непрерывное вложение
\begin{equation}\label{last}
H^{n-s}_{p'}(\mathbb{R}^n) \subset M[H^t_{q'}(\mathbb{R}^n) \to H^{n-s}_{p'}(\mathbb{R}^n)].
\end{equation}

Так же, как на Шаге 6, нужно рассмотреть случаи разных знаков числа $n - s$. Пусть  сначала $n - s \geqslant 0$. В силу Леммы \ref{equal_lemma_new} (используем эквивалентность условий 1) и 3)) справедливость непрерывного вложения \eqref{last} равносильна выполнению мультипликативной оценки
\begin{equation}\label{22}
\| f \cdot \mathbf{g} \|_{H^{n - s}_{p'}(\mathbb{R}^n)} \leqslant \: C_6 \: \| \mathbf{f} \|_{H^t_{q'}(\mathbb{R}^n)} \: \| \mathbf{g} \|_{H^{n - s}_{p'}(\mathbb{R}^n)} \; \; \; \forall \; f, \: g \in D(\mathbb{R}^n).
\end{equation}
Неравенство $p < q$ влечет $q' < p'$,  поэтому из \eqref{18} и \eqref{21b} имеем 
$$
 n - s \geqslant 0, \ \ t > n/q', \ \ q' < p' \ \ \text{и} \ \ t - n/q' \geqslant n - s - n/p'.
$$
Следовательно, выполнены условия  Леммы \ref{multi_inequality} (реализуется случай a) в этой лемме). Значит, верна  оценка \eqref{22}, которая влечет  справедливость непрерывного вложения \eqref{last}.

Рассмотрим теперь случай $n - s < 0$. Тогда, согласно Лемме \ref{equal_lemma_new} (используем эквивалентность условий 1) и 4) в этой лемме), справедливость непрерывного вложения \eqref{last} равносильна мультипликативной оценке
\begin{equation}\label{22.1}
\| f \cdot \mathbf{g} \|_{H^{s - n}_p(\mathbb{R}^n)} \leqslant C_7 \: \| \mathbf{f} \|_{H^t_{q'}(\mathbb{R}^n)} \| \mathbf{g} \|_{H^{s - n}_p(\mathbb{R}^n)} \; \;  \; \forall \; f, \: g \in D(\mathbb{R}^n).
\end{equation}
В этом случае, согласно \eqref{18} и \eqref{21b}, имеем
$$
t \geqslant s - n > 0, \ \ q' > p, \ \ t > n/q'.
$$
Следовательно, выполнены условия  Леммы \ref{multi_inequality} (реализуется случай b) в этой лемме). Поэтому верна оценка \eqref{22}, которая влечет справедливость непрерывного вложения \eqref{last}.

Таким образом, справедливость непрерывных вложений \eqref{last} и \eqref{42} доказана вне зависимости от знака $n - s$.

Тем самым Теорема 1 полностью доказана.


\begin{remark}\label{necessity}
Пусть $s, t \geqslant 0$.  Тогда ограничения на индексы $s, t, p, q$, накладывемые в формулировке Теоремы 1, не только достаточны для выполнения равенства \eqref{equi}, но и необходимы. Авторы обладают доказательством этого утверждения, если исключить из рассмотрения пограничные случаи $s = n/p$ или $t = n/q'$, в которых остаются нерешённые вопросы.
\end{remark}




\bigskip




Если условия стрихартцевского типа не выполняются, то при естественных дополнительных ограничениях оказывается возможным установить справедливость непрерывного вложения некоторого равномерно локализованного пространства бесселевых потенциалов в пространство мультипликаторов.
Мы докажем соответствующий результат для мультипликаторов, действующих между пространствами бесселевых потенциалов с индексами гладкости разного знака, в наиболее важном случае, когда  абсолютные значения индексов гладкости этих пространств совпадают.

\medskip

{\bf Доказательство теоремы 2.}
Справедливость непрерывного вложения
$$
M[H^s_p(\mathbb{R}^n) \to H^{-s}_q(\mathbb{R}^n)] \subset H^{-s}_{\gamma, \: unif}(\mathbb{R}^n), \quad \gamma:= \max (p', q),
$$
следует непосредственно из Леммы \ref{trivial_multipliers_embedding} и Замечания \ref{unif_embedding}.

Пусть выполнены условия Теоремы 2, т.е. $p\leqslant q$,
$$
\left(\frac 1p - \frac 1 q \right) n < s < \frac n{\max (p,q')}\quad \text{и}\ \ r: =\frac n{s-\left(\frac 1p -\frac 1q\right)n}.
$$
Заметим, что в этом случае
$$
r\geqslant \frac ns > \max (p, q') > 1.
$$
Так как $p \leqslant q,$ то можно применить Лемму \ref{unif_embedding_criterion}, согласно которой непрерывное вложение
$$
H^{-s}_{r, \: unif}(\mathbb{R}^n) \subset M[H^s_p(\mathbb{R}^n) \to H^{-s}_q(\mathbb{R}^n)]
$$
имеет место тогда и только тогда, когда выполняется  мультипликативная оценка
\begin{equation}\label{new_estimate}
\| f \cdot \mathbf{g} \|_{H^s_{r'}(\mathbb{R}^n)} \leqslant C \: \| \mathbf{f} \|_{H^s_p(\mathbb{R}^n)} \: \| \mathbf{g} \|_{H^s_{q'}(\mathbb{R}^n)} \; \; \forall \: f, \: g \in D(\mathbb{R}^n).
\end{equation}

Согласно \cite[Theorem 4.2.1]{SiTr},  эта оценка выполняется, если
\begin{equation}\label{36}
\frac{1}{r'} = \frac{1}{p} + \frac{1}{q'} - \frac{s}{n} \: ,
\end{equation}
а также  выполняются еще следующие три  условия:
\begin{equation}\label{37}
\frac{1}{p} - \frac{s}{n} > 0, \; \; \frac{1}{q'} - \frac{s}{n} > 0 \; \; \mbox{и} \; \; \left(\frac{1}{p} - \frac{s}{n}\right) + \left(\frac{1}{q'} - \frac{s}{n}\right) < 1.
\end{equation}
 (В действительности, требование третьего условия в \eqref{37} является излишним, так как это условие заведомо выполняется в случае равенства
 \eqref{36} и неравенства $r' >1$).   Проверим выполнение всех этих условий. Равенство \eqref{36} эквивалентно  равенству
$$
\frac 1r  \; = \; \frac 1q - \frac 1p  + \frac sn \: ,
$$
которое вытекает из определения числа $r$. Далее, первые два неравенства в \eqref{37}  следуют непосредственно из условия
теоремы
$$
s \; < \; \frac n {max(p, q')} \: .
$$

Таким образом, имеет место мультипликативная оценка \eqref{new_estimate}, а значит, справедливо непрерывное вложение
$$
H^{-s}_{r, \: unif}(\mathbb{R}^n) \subset M[H^s_p(\mathbb{R}^n) \to H^{-s}_q(\mathbb{R}^n)].
$$

Теорема 2 доказана.

\bigskip
\bigskip













\end{document}